\theoremstyle{plain}  
\newtheorem{theorem}{Theorem}[section]
\newtheorem*{theorem*}{Theorem}
\newtheorem{corollary}[theorem]{Corollary}
\newtheorem{lemma}[theorem]{Lemma}
\newtheorem{proposition}[theorem]{Proposition}
\theoremstyle{definition}
\newtheorem{definition}[theorem]{Definition}
\theoremstyle{remark}
\newtheorem{remark}[theorem]{Remark}
\newtheorem*{remark*}{Remark}
\newtheorem*{claim*}{Claim}
\DeclareMathOperator{\E}{E}
\DeclareMathOperator{\FF}{F}
\DeclareMathOperator{\PSL}{PSL}
\DeclareMathOperator{\rd}{red}
\DeclareMathOperator{\Sp}{Sp}
\DeclareMathOperator{\PSp}{PSp}
\DeclareMathOperator{\SO}{SO}
\DeclareMathOperator{\Spec}{Spec}
\DeclareMathOperator{\Hom}{Hom}
\DeclareMathOperator{\Id}{Id}
\DeclareMathOperator{\End}{End}
\DeclareMathOperator{\tr}{tr}
\DeclareMathOperator{\GL}{GL}
\DeclareMathOperator{\SL}{SL}
\DeclareMathOperator{\SU}{SU}
\DeclareMathOperator{\U}{U}
\DeclareMathOperator{\PU}{PU}
\DeclareMathOperator{\Log}{Log}
\DeclareMathOperator{\PGL}{PGL}
\DeclareMathOperator{\Pic}{Pic}
\newcommand{\slf}{\mathfrak{sl}}
\newcommand{\git}{/\!\!/}
\newcommand{\C}{\mathbb{C}}
\newcommand{\CC}{\mathbb{C}}
\newcommand{\IH}{IH}
\newcommand{\inv}{^{-1}}
\newcommand{\RR}{\mathbb{R}}
\newcommand{\ZZ}{\mathbb{Z}}
\begin{document}

\title[Intersection cohomology of higher rank Teichm\"uller components]{Intersection cohomology of  higher rank Teichm\"uller components}

\author[Mathieu Ballandras]{Mathieu Ballandras}
\address{Instituto de Ciencias Matem\'aticas \\
  CSIC-UAM-UC3M-UCM \\ Nicol\'as Cabrera, 13--15 \\ 28049 Madrid \\ Spain}
\email{mathieu.ballandras@icmat.es}

 \author[Oscar Garc{\'\i}a-Prada]{Oscar Garc{\'\i}a-Prada}
\address{Instituto de Ciencias Matem\'aticas \\
  CSIC-UAM-UC3M-UCM \\ Nicol\'as Cabrera, 13--15 \\ 28049 Madrid \\ Spain}
\email{oscar.garcia-prada@icmat.es}

\noindent
\thanks{
 \noindent
Partially supported by the Spanish Ministry of Science and
Innovation, through the ``Severo Ochoa Programme for Centres of Excellence in R\&D (CEX2019-000904-S)'' and grant  PID2019-109339GB-C31}

\subjclass[2000]{Primary 14H60; Secondary 57R57, 58D29}

\begin{abstract}
Exploiting the non-abelian Hodge correspondence, together with the Cayley correspondence, in this paper, we compute the intersection  cohomology of certain singular higher rank  Teichm\"uller components of  character varieties of the fundamental group of a compact Riemann surface. 

\end{abstract}

\maketitle

\section{Introduction}

Let $\mathcal{R}(G)$ be the character variety classifying reductive representations of the fundamental group of a compact Riemann surface of genus $g\geq 2$ in a real semisimple Lie group $G$. Higher rank Teichm\"uller components are connected components of $\mathcal{R}(G)$ consisting entirely of faithful and discrete representations. In this article, we compute the intersection cohomology of certain higher rank Teichm\"uller components using the Higgs bundle point of view initiated by Hitchin \cite{Hitchin}.

The non-abelian Hodge correspondence provides a powerful approach to studying the topology of character varieties. This correspondence, due to Hitchin \cite{Hitchin}, Donaldson \cite{Donaldson}, Simpson \cite{Simpson}, Corlette \cite{Corlette}  and others
(see \cite{Garcia-Prada_Gothen_Mundet} in particular for the case of real groups), gives a homeomorphism between the character variety $\mathcal{R}(G)$ and the
moduli space $\mathcal{M}_K(G)$ of $G$-Higgs bundles. Here $K$ is the canonical line bundle of the Riemann surface. The complex structure of the Riemann surface endows  $\mathcal{M}_K(G)$ with a very rich structure, in particular with an action of $\CC^*$, which makes the study of the topology of $\mathcal{M}_K(G)$ much more  amenable than the direct study on the character variety. 
This approach has been very successful in the  computation of topological invariants of character varieties such as the number of connected components, the cohomology or the intersection cohomology.

In this paper, in addition to the non-abelian Hodge correspondence, we also take advantage of  the  Cayley correspondence that exists for certain classes of real groups. One such class is given by the Hermitian groups of tube type. In this situation the Cayley correspondence was studied on a case by case fashion for the classical groups in \cite{Bradlow_Garcia-Prada_Gothen,BGG06,BGG15,GGM13} and in general in \cite{BGR17}. Another class is given by the special orthogonal groups. Collier \cite{Collier} studied the case of $\SO(n,n+1)$ and the general $\SO(p,q)$ case  was studied in \cite{Aparicio-Arroyo_Bradlow_Collier_Garcia-Prada_Gothen_Oliveira}. A general Cayley correspondence has been given more recently in \cite{Bradlow_Collier_Garcia-Prada_Gothen_Oliveira}, including the previous cases, as well as the case of quaternionic real forms of $F_4$, $E_6$, $E_7$ and $E_8$. This general construction includes also the construction of the Hitchin components for split real forms given by Hitchin \cite{Hitchin92}. 
For a group $G$ belonging to one of the  families mentioned above, the Cayley correspondence gives a description of some topological  components of the moduli space $\mathcal{M}_K(G)$ in terms of a moduli space of $K^m$-twisted $G'$-Higgs bundles  for another group $G'$ (for more details see Definition \ref{def_twisted_higgs}, Theorem \ref{th_cayley} and \cite{Bradlow_Collier_Garcia-Prada_Gothen_Oliveira}). The connected components of  $\mathcal{M}_K(G)$ described in this way are called Cayley components.

This correspondence is particularly interesting from the point of view of higher rank Teichm\"uller theory. Labourie \cite{Labourie} introduced the notion of Anosov representation, and Guichard--Labourie--Wienhard \cite{Guichard_Labourie_Wienhard}, building upon the concept of positive structure given by Guichard--Wienhard \cite{Guichard_Wienhard,Guichard_Wienhard2},  introduced the notion of positive
representations. Positive representations are in particular Anosov and hence
 discrete and faithful.
The components of the character varieties corresponding to Cayley components consist entirely of positive  representations,
which means that they are higher rank Teichm\"uller components, and therefore objects of central interest from the point of view of higher rank Teichm\"uller theory.

In this article we exploite the Cayley correspondence to compute the intersection cohomology of certain singular higher rank Teichm\"uller components for the following pairs of groups
\begin{enumerate}
    \item $G=\SO(n,n+1)$, $G'=\SO(1,2)$
    \item $G=\SO_0(n,n+2)$, $G'=\SO_0(1,3)$ for $n$ odd
    \item $G=\PU(n,n)$, $G'=\PGL(n,\CC)$ and $G=\U(n,n)$, $G'=\GL(n,\CC)$
    \item $G$ the quaternionic real form of the adjoint form of the complex group $\E_6$, and $G'=\PGL(3,\CC)$.
\end{enumerate}

The first case is treated in Section \ref{section_SO}. The Cayley components were described by Collier \cite{Collier}. When $n=2$ they were already studied by Gothen \cite{Gothen01} in the closely related situation of $\Sp(4,\RR)$-Higgs bundles. The relation between both points of view comes from the isomorphism between $\PSp(4,\RR)$ and the component of the identity of $\SO(2,3)$. The corresponding Cayley components, referred  as  Collier--Gothen components,  form an open and closed subvariety of $\mathcal{M}_K\left(\SO(n,n+1)\right)$ isomorphic to
\[
\mathcal{M}_{K^n}\left(\SO(1,2)\right)\times \bigoplus_{j=1}^n H^0(K^{2j}).
\]

For  $n\ge 2$, only one of these components, denoted by
$\mathcal{M}^{\mathcal{C},0}_K\left(\SO(n,n+1)\right)$
has singularities `worse' than finite quotient singularities, and it is hence 
interesting to study its intersection cohomology. We compute its Hodge polynomial for intersection cohomology with the general method developped by Kirwan \cite{Kirwan1986}.

In the other cases the intersection cohomology can be obtained by combining various previous results about moduli spaces of Higgs bundles. The first important result, due to Maulik and Shen \cite{Maulik_Shen}, is the fact that the intersection cohomolgy of the moduli space of $L$-twisted Higgs bundles of rank $n$ and degree $d$ is independent of $d$ when $\deg L > 2g -2$. In particular, by taking $n$ and $d$ coprime, the moduli space is smooth, and hence the intersection cohomology is nothing but the usual cohomology. The cohomology of this moduli space was computed by counting Higgs bundles over finite field by Schiffmann \cite{Schiffmann}, Mozgovoy--Schiffmann \cite{Mozgovoy_Schiffmann} and Mozgovoy--O'Gorman \cite{Mozgovoy_OGorman}. This is  explained in Section \ref{sect_ic_higgs_bundles}.

Using these results we take care of $\U(n,n)$ of case (3) in Section \ref{unn}. To deal with  $\PU(n,n)$ of case (3), in Section \ref{punn} we  exploit a well-known relation between the moduli spaces of $\GL(n,\CC)$-Higgs bundles and $\PGL(n,\CC)$-Higgs bundles to relate their intersection cohomology --- this is explained in Section \ref{gltopgl}. This also solves cases (4) in Section \ref{e6}
and (2) in Section \ref{sonn+2}. For (2), we use the isomorphism  $\SO_0(1,3)\cong \PGL(2,\CC)$.
We have not been able to deal with the  group $G=\SU(n,n)$, closely related to the groups in  case (3). This is due to the fact that its Cayley partner group  is $G'=\SL_n(\CC)\rtimes \RR^*$ (see \cite{BGR17}), and  we do not know how to compute the intersection cohomology of $\mathcal{M}_{K^2}(G')$.

\section{Character varieties and moduli spaces of Higgs bundles}
In this section $\Sigma_g$ is a compact smooth surface of genus $g\ge 2$ and $G$  a real reductive group. In Theorems \ref{th_nah_semisimple} and \ref{th_cayley} it is moreover assumed that $G$ is semisimple.
\subsection{Character varieties}
A representation of $\pi_1(\Sigma_g)$ in $G$ is reductive if and only if its composition with the adjoint action of $G$ on its Lie algebra $\mathfrak{g}$ decomposes as a direct sum of irreducible representations. The set of reductive representations is denoted by $\Hom^{\rd}\left(\pi_1(\Sigma_g), G\right)$, the group $G$ acts on this set by conjugation.
\begin{definition}
    The {\em character variety} or {\em moduli space of representations} $\mathcal{R}(G)$ is defined by:
    \[
    \mathcal{R}(G) := \Hom^{\rd}\left(\pi_1(\Sigma_g),G\right) / G.
    \]
\end{definition}
\begin{remark}
Restricting to reductive representations ensures in particular that $\mathcal{R}(G)$ is Hausdorff.
\end{remark}
There is an equivalence of categories between the category of representations of $\pi_1(\Sigma_g)$ in $G$, and the category of principal $G$-bundles on $\Sigma_g$ endowed with a flat connection. The topological class of principal $G$-bundles induces an open and closed decomposition of the character variety. If $G$ is connected, the topological classes are indexed by the fundamental group of $G$ and
\[
\mathcal{R}(G) = \bigsqcup_{\gamma\in \pi_1(G)} \mathcal{R}^\gamma(G).
\]
Therefore the character variety $\mathcal{R}(G)$ is generally not connected. Moreover the $\mathcal{R}^\gamma(G)$ themselves are not always connected. In this article we study particular connected component of $\mathcal{R}(G)$ called higher rank Teichm\"uller components. 
\begin{definition}
    A {\em higher rank Teichm\"uller component} is defined as a connected component of the character variety $\mathcal{R}(G)$ consisting entirely of representations which are faithful with discrete image.
\end{definition}
\begin{remark}
    The usual Teichm\"uller space of $\Sigma_g$ is such a component in $\mathcal{R}(\PSL(2,\RR))$.
\end{remark}

\subsection{Higgs bundles}
Let $X$ be a complex smooth projective curve of genus $g\ge 2$, whose underlying smooth surface  is the compact surface $\Sigma_g$. Let $H$ be a maximal compact subgroup of $G$. The Lie algebra of $G$ admits a Cartan decomposition $\mathfrak{g}=\mathfrak{h}\oplus \mathfrak{m}$ with $\mathfrak{h}$ the Lie algebra of the maximal compact subgroup $H$. Tensoring over $\RR$ with $\CC$ provides a decomposition of the complexified Lie algebra $\mathfrak{g}^{\CC}=\mathfrak{h}^{\CC}\oplus \mathfrak{m}^{\CC}$. The group $H^{\CC}$, obtained by complexifying $H$, acts on $\mathfrak{m}^{\CC}$ by restriction of the adjoint action.
\begin{definition}
A $G$-{\em Higgs bundle} is a pair  $(E,\phi)$ consisting of 
a holomorphic principal $H^{\mathbb{C}}$-bundle $E$ on $X$ and  a Higgs field $\phi \in H^0\left(X, K\otimes E(\mathfrak{m}^{\CC})\right)$. Here 
$K$ is the canonical bundle of $X$ and $E(\mathfrak{m}^{\CC}) := E\times \mathfrak{m}^{\CC}/H^{\CC}$.
\end{definition}
\begin{remark}
    For $G=\GL(n,\CC)$, one can exploit the equivalence between  principal $\GL(n,\CC)$-bundles  and rank $n$ vector bundles, and consider  $E$ to be a holomorphic vector bundle of rank $n$. Then the Higgs field lies in $H^0\left(X,K\otimes \End(E)\right)$.  This is the original  notion of Higgs bundle introduced by Hitchin in his seminal paper \cite{Hitchin}.
\end{remark}
There is a suitable notion of stability for $G$-Higgs bundles which allows to construct a {\em moduli space}  $\mathcal{M}_K\left(G\right)$ classifying polystable $G$-Higgs bundles up to isomorphism. This is an algebraic variety over $\CC$ \cite{Garcia-Prada_Gothen_Mundet}.

\begin{remark}
The topological type of the principal $H^{\CC}$-bundle $E$ induces an open and closed decomposition of the moduli space. If $G$ is connected, so is $H$ and the topological types are indexed by $\pi_1(H^{\CC}) \cong \pi_1(G)$ 
\[
\mathcal{M}_K\left(G\right) \cong \bigsqcup_{\gamma\in \pi_1(G)} \mathcal{M}^\gamma_K\left(G\right).
\]
\end{remark}
Higgs bundles offer an interesting insight to study character varieties thanks to the non-abelian Hodge correspondence. This theory due to Hitchin \cite{Hitchin}, Donaldson \cite{Donaldson}, Simpson \cite{Simpson}, Corlette \cite{Corlette}  (for general real forms see \cite{Garcia-Prada_Gothen_Mundet}) culminates in the following results.
\begin{theorem}\label{th_nah_semisimple}
    For $G$ semisimple, the character variety $\mathcal{R}(G)$ is homeomorphic to the moduli space $\mathcal{M}_K(G)$. Moreover, this homeomorphism respects topological types and hence it induces an homeomorphism between $\mathcal{R}^\gamma(G)$ and $\mathcal{M}^\gamma_K(G)$.
\end{theorem}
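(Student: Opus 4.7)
The plan is to establish the homeomorphism by going through a third model, the moduli space of equivariant harmonic maps from the universal cover $\tilde X$ to the symmetric space $G/H$, or equivalently the moduli space of solutions to Hitchin's self-duality equations; both $\mathcal{R}(G)$ and $\mathcal{M}_K(G)$ are identified with this intermediate space, and the composition yields the sought homeomorphism.

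\textbf{From representations to harmonic bundles (Corlette--Donaldson).} Given a reductive representation $\rho\colon\pi_1(\Sigma_g)\to G$, form the flat principal $G$-bundle $P_\rho=\tilde X\times_\rho G$. A reduction of structure group of $P_\rho$ to a maximal compact $H\subset G$ is the same as a $\rho$-equivariant map $f\colon\tilde X\to G/H$. Corlette's theorem (generalising Donaldson) asserts that when $\rho$ is reductive, the energy functional on the space of such equivariant maps attains its infimum, producing a harmonic $f$; reductivity is exactly the condition ensuring properness of the energy, preventing energy from escaping to infinity along a geodesic ray in $G/H$.

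\textbf{From harmonic bundles to Higgs bundles.} Decompose the flat connection on $P_\rho$ via the Cartan decomposition $\mathfrak{g}=\mathfrak{h}\oplus\mathfrak{m}$ as $D=\nabla+\psi$, with $\nabla$ a connection on the $H$-reduction and $\psi$ a $1$-form with values in the $\mathfrak{m}$-part of the adjoint bundle. Complexifying and taking $(0,1)$-parts, $\nabla^{0,1}$ endows $E:=P_\rho\times_H H^{\mathbb{C}}$ with a holomorphic structure, and $\phi:=\psi^{1,0}$ becomes a section of $K\otimes E(\mathfrak{m}^{\mathbb{C}})$. Harmonicity of $f$ translates into $\bar\partial\phi=0$, so $\phi$ is holomorphic, and flatness of $D$ encodes the Hitchin self-duality equations, which in turn force polystability of $(E,\phi)$.

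\textbf{From Higgs bundles to harmonic bundles (Hitchin--Simpson).} Conversely, given a polystable $G$-Higgs bundle $(E,\phi)$, one must produce an $H$-reduction solving the Hitchin equations. For $G=\SL(n,\mathbb{C})$ this is Hitchin's original theorem, for complex reductive $G$ it is due to Simpson, and for real reductive $G$ the extension is carried out in \cite{Garcia-Prada_Gothen_Mundet}. From such a reduction one recovers a flat $G$-connection $D=\nabla+\phi+\phi^{*}$ whose monodromy is a reductive representation of $\pi_1(\Sigma_g)$, and the two constructions are inverse to each other by uniqueness of the harmonic metric up to automorphisms.

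\textbf{Topological types and continuity.} The harmonic metric provides a common topological $H$-reduction of the underlying principal bundle on both sides. Since the inclusions $H\hookrightarrow G$ and $H\hookrightarrow H^{\mathbb{C}}$ are homotopy equivalences, principal $G$-bundles and $H^{\mathbb{C}}$-bundles over $\Sigma_g$ are both classified by $\pi_1(H)\cong\pi_1(G)$, and the bijection respects this common label $\gamma$, producing the refined homeomorphism $\mathcal{R}^\gamma(G)\cong\mathcal{M}_K^\gamma(G)$. Continuity in both directions follows from the continuous dependence of solutions to the harmonic map and Hitchin equations on the underlying data, compatibly with the natural topologies coming from the compact-open topology on $\mathrm{Hom}$ and from the analytic or GIT construction of $\mathcal{M}_K(G)$. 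The main obstacle is the hard PDE existence theorems of Corlette and Hitchin--Simpson, together with, in the real reductive setting, the careful bookkeeping with the Cartan decomposition carried out in \cite{Garcia-Prada_Gothen_Mundet}; upgrading the bijection to a homeomorphism also requires matching Kuranishi-type slices, which is most subtle at strictly polystable points where stabilisers jump.
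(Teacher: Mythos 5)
The paper does not prove this theorem; it is quoted directly from the literature (Hitchin, Donaldson, Corlette, Simpson, and \cite{Garcia-Prada_Gothen_Mundet} for general real forms), and your outline is a faithful summary of exactly the standard argument those references carry out --- Corlette's existence of the equivariant harmonic map for reductive $\rho$, the Hitchin--Kobayashi direction for polystable $G$-Higgs bundles, and the matching of topological types via the homotopy equivalences $H\hookrightarrow G$ and $H\hookrightarrow H^{\CC}$. Your sketch is correct at the level of detail appropriate here, with the genuinely hard content (the two PDE existence theorems and the homeomorphism property at strictly polystable points) rightly deferred to the cited works.
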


For $G$ a real reductive group there are two possibilities to obtain a similar statement. The first possibility (Theorem \ref{th_torions_nah}) is to restrict to components with torsion topological types. The second possibility is to consider representations of a larger group than $\pi_1(\Sigma_g)$, namely a central extension $\Gamma$ of $\pi_1(\Sigma_g)$ (see Theorem \ref{th_gamma_nah}). 
\begin{theorem}\label{th_torions_nah}
    For $G$ a connected real reductive group and $\gamma$ a torsion element in $\pi_1(G)$, the component $\mathcal{R}^{\gamma}(G)$ of the character variety is homeomorphic to $\mathcal{M}^{\gamma}_K(G)$.
\end{theorem}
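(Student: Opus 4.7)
The strategy is to reduce Theorem \ref{th_torions_nah} to the semisimple non-abelian Hodge correspondence (Theorem \ref{th_nah_semisimple}), exploiting the torsion hypothesis on $\gamma$ to avoid passing to a central extension of $\pi_1(\Sigma_g)$ as is needed in the general reductive correspondence (Theorem \ref{th_gamma_nah}).

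The first observation is at the level of fundamental groups. Let $G_{ss}=[G,G]$ be the semisimple derived subgroup and $T=G/G_{ss}$ the abelianization, which is a torus. The fibration $G_{ss}\to G \to T$ induces an exact sequence
\[
0 \longrightarrow \pi_1(G_{ss}) \longrightarrow \pi_1(G) \longrightarrow \pi_1(T) \longrightarrow 0,
\]
in which $\pi_1(T)\cong \ZZ^{\dim T}$ is torsion-free. Hence any torsion class $\gamma \in \pi_1(G)$ lies in the image of $\pi_1(G_{ss})$, so that the principal $T$-bundle associated with any representation or Higgs bundle of topological type $\gamma$ is topologically trivial.

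I would then describe both $\mathcal{R}^\gamma(G)$ and $\mathcal{M}^\gamma_K(G)$ as finite quotients of products of a semisimple factor, indexed by a preimage $\widetilde{\gamma} \in \pi_1(G_{ss})$ of $\gamma$, and an abelian factor arising from $T$. On the representation side the $T$-factor is the torus $\Hom(H_1(\Sigma_g,\ZZ),T)$; on the Higgs side it is the moduli of degree-zero $T$-bundles equipped with a Higgs field in $H^0(K)\otimes \mathfrak{t}^{\CC}$, a product of cotangent bundles of Jacobians. Theorem \ref{th_nah_semisimple} provides the homeomorphism on the semisimple factor, while the classical Hodge isomorphism $H^1(\Sigma_g,\RR)\cong H^0(K)\oplus \overline{H^0(K)}$ identifies the abelian factors. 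A compatibility check for the action of the finite central subgroup then ensures that the product homeomorphism descends to the components labelled by $\gamma$.

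The main obstacle, and the reason the torsion hypothesis is essential, is the matching of topological types through the isogeny $G_{ss} \hookrightarrow G \twoheadrightarrow T$. For a non-torsion $\gamma$ the associated $T$-bundle has non-zero degree, and the correspondence then forces the use of the central extension $\Gamma$ of $\pi_1(\Sigma_g)$ from Theorem \ref{th_gamma_nah}; when $\gamma$ is torsion this extension is unnecessary because the obstruction lies in a finite group and can be absorbed by a finite central identification. The analytic input is thus entirely inherited from \cite{Garcia-Prada_Gothen_Mundet}, and the remaining work is a careful bookkeeping of connected components through the isogeny on both the character-variety and Higgs-bundle sides.
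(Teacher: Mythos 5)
The paper does not actually prove Theorem \ref{th_torions_nah}: it is stated as background and deferred to the non-abelian Hodge literature, in particular \cite{Garcia-Prada_Gothen_Mundet} and the Atiyah--Bott framework behind Theorem \ref{th_gamma_nah}. The mechanism there is more direct than yours: the topological type $\gamma$ determines a central element $\mu(\gamma)\in i\mathfrak{z}(\mathfrak{h})$ on the right-hand side of Hitchin's equation (equivalently, the image of the central generator of $\Gamma$), and $\mu(\gamma)=0$ exactly when $\gamma$ is torsion; in that case the associated connection is honestly flat, its holonomy is a reductive representation of $\pi_1(\Sigma_g)$ itself, and Theorem \ref{th_torions_nah} is the corresponding specialization of Theorem \ref{th_gamma_nah} --- no decomposition of $G$ is needed. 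Your reduction to the semisimple case via an isogeny is a genuinely different route; it can be made to work, but it trades one analytic observation for a substantial amount of topological bookkeeping, and it is precisely that bookkeeping which is missing from your sketch.

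Concretely: (i) the covering you need is $G_{ss}\times Z_0\to G$ with $Z_0=Z(G)_0$ and kernel the finite group $F=G_{ss}\cap Z_0$; the quotient $T=G/G_{ss}$ is only isogenous to $Z_0$, and $G$ is not a finite quotient of $G_{ss}\times T$. (ii) Triviality of the induced $T$-bundle is not yet the statement you use: you must show that every reductive representation of type $\gamma$ \emph{lifts} to $G_{ss}\times Z_0$. The obstruction lies in $H^2(\pi_1(\Sigma_g),F)\cong F$ and equals the image of $\gamma$ under $\pi_1(G)\to\pi_1(G)/(\pi_1(G_{ss})\oplus\pi_1(Z_0))\cong F$; this map factors through the torsion-free group $\pi_1(T)$, which is where the torsion hypothesis actually enters --- this link is absent from your write-up. (iii) The finite group by which the product must be divided is not the central subgroup $F$ but $\Hom(\pi_1(\Sigma_g),F)\cong F^{2g}$, acting by twisting lifts, and one must check that the semisimple and abelian correspondences are equivariant for this twisting (true, since twisting by a flat central unitary character preserves solutions of Hitchin's equations, but it has to be said). (iv) Your description of the abelian factor on the Higgs side is wrong in general: the Higgs field lives in $H^0(K)\otimes\mathfrak{m}^{\CC}$ for the Cartan decomposition of $\mathfrak{z}_0$, not in $H^0(K)\otimes\mathfrak{t}^{\CC}$; for $Z_0$ compact there is no Higgs field at all and the moduli space is $\Hom(\pi_1(\Sigma_g),Z_0)\cong\Pic^0(X)^{\dim Z_0}$, not a cotangent bundle of a Jacobian (the dimensions do not match). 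None of these is fatal, but together they are the real content of the argument you propose.
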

The second point of view is detailed in Atiyah--Bott \cite[Section 6]{Atiyah_Bott} for $G$ compact. As in this reference, consider a central extension $\Gamma$ of $\pi_1(\Sigma_g)$ with centre $\RR$. Let $\Hom^{Z,\rd}\left(\Gamma,G\right)$ be the set of reductive representations $\rho : \Gamma \to G$ such that $\rho(\RR) \subset Z(G)$ and the following diagram commutes
\[
\begin{tikzcd}
    0  \arrow[r] & \RR \arrow[d]\arrow[r] & \Gamma \arrow[d,"\rho"]\arrow[r]& \pi_1(\Sigma_g) \arrow[d]\arrow[r] & 1 \\
    1 \arrow[r] & Z(G) \arrow[r] & G \arrow[r] & G/Z(G) \arrow[r] & 1.
\end{tikzcd}
\]
The relevant character variety for the non-abelian Hodge correspondence is now $\widetilde{\mathcal{R}}(G) := \Hom^{Z,\rd}\left(\Gamma,G\right) / G $. 
\begin{theorem}\label{th_gamma_nah}
    For $G$ a real reductive group, the non-abelian Hodge correspondence provides a homeomorphism between $\widetilde{\mathcal{R}}(G)$ and $\mathcal{M}_K(G)$.
\end{theorem}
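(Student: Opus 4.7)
The plan is to bootstrap from the semisimple case (Theorem \ref{th_nah_semisimple}) using the structure theory of real reductive groups, following the approach of Garc\'ia-Prada--Gothen--Mundet \cite{Garcia-Prada_Gothen_Mundet}. First, I would write $G$ as an almost direct product $G = Z(G)^0 \cdot G^{\mathrm{ss}}$ of the identity component of its centre and its derived (semisimple) subgroup. Up to a finite isogeny, this decomposition lifts both to moduli spaces of Higgs bundles, which factor into a $Z(G)^0$-part and a $G^{\mathrm{ss}}$-part, and to the representation side, where a $\rho$ decomposes accordingly into an abelian and a semisimple factor. The semisimple factor is handled directly by Theorem \ref{th_nah_semisimple}, so the problem reduces to treating the abelian factor and checking compatibility with the isogeny.

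For the abelian factor $Z(G)^0$, which is a product of a compact torus and a vector space, the situation is essentially linear and reduces to classical abelian Hodge theory. On one side, $Z(G)^0$-Higgs bundles are classified by a Jacobian-type parameter together with a Higgs field in $H^0(X, K \otimes \mathfrak{z}(\mathfrak{g}))$; on the other side, the relevant objects are representations $\rho : \Gamma \to Z(G)^0$ whose restriction to $\RR \subset \Gamma$ lands in the centre. The extension $\Gamma$ exists precisely to accommodate the non-torsion topological classes of $G$-bundles which cannot be realised as monodromies of genuine $\pi_1(\Sigma_g)$-representations: the continuous central parameter in $\pi_1(Z(G)^0)$ is absorbed by the $\RR$-factor of $\Gamma$ through the centrality condition in the commutative diagram.

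In each direction the construction combines gauge theory with harmonic map theory. Given a polystable $G$-Higgs bundle, one solves the Hitchin equations to produce a Hermite--Einstein metric, hence a flat reductive $G$-connection; its monodromy yields a representation of $\Gamma$ satisfying the centrality condition once the topological type has been incorporated. Conversely, starting from a reductive $\rho : \Gamma \to G$ fitting into the diagram, the Corlette--Labourie theorem produces a $\rho$-equivariant harmonic map from $\widetilde{X}$ to the symmetric space $G/H$, and the $(1,0)$-part of its derivative, decomposed according to the Cartan splitting $\mathfrak{g}^{\CC} = \mathfrak{h}^{\CC} \oplus \mathfrak{m}^{\CC}$, defines the Higgs field on the associated $H^{\CC}$-bundle.

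The main obstacle I expect is the careful bookkeeping of topological types: one must verify that a flat $G$-bundle of arbitrary, possibly non-torsion, topological class does correspond to a genuine representation of $\Gamma$ (rather than only of $\pi_1(\Sigma_g)$), and, conversely, that any $\rho$ satisfying the centrality condition descends to a flat $G$-bundle on $X$ with the prescribed topological type. This amounts to comparing the short exact sequence $1 \to Z(G)^0 \to G \to G/Z(G)^0 \to 1$ with the central extension $0 \to \RR \to \Gamma \to \pi_1(\Sigma_g) \to 1$, and tracking how the finite kernel of the isogeny $Z(G)^0 \times G^{\mathrm{ss}} \to G$ intervenes on both sides. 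Once this compatibility is in place, continuity, properness and the homeomorphism property follow from the standard gauge-theoretic estimates for solutions of the Hitchin equations.
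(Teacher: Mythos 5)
The paper does not actually prove this statement: Theorem \ref{th_gamma_nah} is quoted as a known result, with the $\Gamma$-formalism taken from Atiyah--Bott and the correspondence itself from Hitchin, Donaldson, Simpson, Corlette and, for general real reductive $G$, from \cite{Garcia-Prada_Gothen_Mundet}. Measured against that source, the two analytic pillars of your outline are the right ones --- solving the Hitchin equations to go from a polystable $G$-Higgs bundle to a (projectively) flat reductive connection, and the Corlette-type existence theorem for a $\rho$-equivariant harmonic map $\widetilde{X} \to G/H$ whose differential, split along $\mathfrak{g}^{\CC} = \mathfrak{h}^{\CC} \oplus \mathfrak{m}^{\CC}$, recovers the Higgs field --- and your explanation of why $\Gamma$ is needed (to realise non-torsion topological types, which carry no genuine flat structure, as monodromies of connections with constant central curvature) is correct and is exactly the Atiyah--Bott picture.

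Where your route diverges from the cited proof, and where it has a genuine gap, is the proposed reduction to the semisimple case via the almost direct product $G = Z(G)^0 \cdot G^{\mathrm{ss}}$. As stated, the claim that ``a $\rho$ decomposes accordingly into an abelian and a semisimple factor'' is not true: a homomorphism $\Gamma \to G$ does not factor through $Z(G)^0 \times G^{\mathrm{ss}}$; only its compositions with the two quotient maps are defined, and reconstructing $\rho$ from that pair requires a matching condition over the finite subgroup $Z(G)^0 \cap G^{\mathrm{ss}}$, possibly after passing to a further cover or central extension. Correspondingly, neither $\widetilde{\mathcal{R}}(G)$ nor $\mathcal{M}_K(G)$ is a product of an abelian piece and a semisimple piece; each is at best a finite quotient of such a product, and identifying the two finite group actions and showing the homeomorphism descends is precisely the hard bookkeeping you defer to the end. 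The reference \cite{Garcia-Prada_Gothen_Mundet} avoids this entirely by proving a Hitchin--Kobayashi correspondence for general $L$-twisted pairs, hence for reductive $G$ directly, with no decomposition; if you want to keep your reduction strategy you would need to carry out the isogeny bookkeeping explicitly (including checking that the centrality condition $\rho(\RR) \subset Z(G)$ is compatible with it), which is a substantive missing step rather than a routine verification.
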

\begin{remark}
    We mention this version of the non-abelian Hodge correspondence for reductive groups since we will be dealing with  the case $G=\U(n,n)$. 
\end{remark}
One  upshot of the non-abelian Hodge correspondence is that it provides many powerful tools for the study the topology of character varieties. Indeed, the complex structure of the Riemann surface endowes  $\mathcal{M}_K(G)$ with a very rich structure, in particular with an action of $\CC^*$ scaling the Higgs field, which makes the study of the topology of $\mathcal{M}_K(G)$ much more  amenable than the direct study on the character variety.  This action combined with Morse-theoretic arguments and similar algebraic localization arguments can be used to count connected components and also to compute cohomology (see e.g.
\cite{Hitchin, Garcia-Prada_Heinloth,Garcia-Prada_Heinloth_Schmitt}).

\subsection{Higgs bundles and higher rank Teichm\"uller components}
In order to describe the Cayley correspondence, which plays a central role in this work, we need a slight generalization of the notion of Higgs bundles where the canonical bundle $K$ is replaced by any line bundle $L$.

\begin{definition}\label{def_twisted_higgs}
  Let $L$ be a holomorphic line bundle on $X$. An $L$-{\em twisted $G$-Higgs bundle} is a pair $(E,\phi)$ consisting of
  a principal $H^{\mathbb{C}}$-bundle $E$ on $X$ and Higgs field $\phi \in H^0\left(X, L\otimes E(\mathfrak{m}^{\CC})\right)$.
  
The moduli space of polystable $L$-twisted $G$-Higgs bundles is a complex algebraic variety denoted by $\mathcal{M}_L(G)$.
\end{definition}

\begin{theorem}[Cayley correspondence \cite{Bradlow_Collier_Garcia-Prada_Gothen_Oliveira}]\label{th_cayley}
    If $G$ is semisimple and is either
    \begin{enumerate}
        \item a split real form,
        \item Hermitian of tube type,
        \item locally isomorphic to $\SO(p,q)$,
        \item a quaternionic real form of $\FF_4$, $\E_6$, $\E_7$ or $\E_8$,
    \end{enumerate}
then there exists a real semisimple group $G'$, integers $N$, $(l_i)_{1\le i \le N}$, $m_c$, and an open and closed embedding $\Psi$
    \[
    \Psi : \mathcal{M}_{K^{m_c+1}}(G') \times \bigoplus^N_{i = 1} H^0(X,K^{l_i+1}) \to \mathcal{M}_K(G).
    \]
    The group $G'$ and the integers $N$, $l_i$ and $m_c$ admit an explicit Lie theoretic description detailed in \cite{Bradlow_Collier_Garcia-Prada_Gothen_Oliveira}.
\end{theorem}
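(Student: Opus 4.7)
The plan is to construct $\Psi$ via the mechanism of a magical (or $\theta$-positive) $\mathfrak{sl}_2$-triple $(e,h,f)\subset\mathfrak{g}$ with $e\in\mathfrak{m}$, which is the Lie-theoretic data underlying the Cayley correspondence uniformly across all four families. Such a triple induces an $\mathrm{ad}(h)$-grading
$$\mathfrak{g}^{\CC} = \bigoplus_{k\in\ZZ} \mathfrak{g}^{\CC}_k$$
refining the Cartan decomposition, in which $m_c+1$ is the highest eigenvalue occurring in $\mathfrak{m}^{\CC}$ and the integers $l_i+1$ are the eigenvalues attached to the remaining highest-weight lines for the residual action of the centralizer of the triple. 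The partner group $G'$ is a real form of this centralizer (suitably adjusted so that its Cartan decomposition matches the zero-weight part of the grading), and the line-bundle twists appearing in the target moduli spaces come directly from the $\mathrm{ad}(h)$-weights.

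The natural construction of $\Psi$ on objects then proceeds as follows. Given $(E',\phi')\in\mathcal{M}_{K^{m_c+1}}(G')$ and $q_i\in H^0(X,K^{l_i+1})$, I would extend $E'$ to a principal $H^{\CC}$-bundle $E$ by inducing through the inclusion of the complexified isotropy of the triple into $H^{\CC}$, after twisting each graded piece by the power of $K$ dictated by the corresponding $\mathrm{ad}(h)$-weight. The Higgs field on $E$ is then assembled as the sum of a distinguished section carried by $e$ (which forces the twist $K^{m_c+1}$ on the principal direction), the extended $\phi'$, and a contribution $\sum_i q_i\, v_i$ where $v_i$ is a highest-weight vector of the relevant piece of $\mathfrak{m}^{\CC}$. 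A direct weight-by-weight check in each family shows that these assemble into a well-defined $K$-valued section of $E(\mathfrak{m}^{\CC})$, and polystability on the $G$-side is forced by the regularity of $e$: no reduction of $E$ compatible with the rigid $e$-component can be destabilising, so polystability is automatically transported by $\Psi$.

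After verifying algebraicity and injectivity (with the inverse recovered from the reduction of structure group determined by the centralizer of the $e$-component of the Higgs field), the crux is to show that the image is both open and closed. Openness follows because the condition that a Higgs field lie in the Cayley normal form is Zariski-open inside its topological sector. Closedness, which is the subtler half, amounts to identifying the image as the level set of a topological invariant (a generalised Toledo-type number, or equivalently the extremal locus of a natural Morse function on $\mathcal{M}_K(G)$) so that no polystable degeneration can leave it.

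The main obstacle is the absence of a case-free argument: the very existence of a magical $\mathfrak{sl}_2$-triple together with the explicit determination of $G'$, $N$, $l_i$, $m_c$, and the extremal topological invariant, rests on the classification of real forms admitting a $\theta$-positive structure, which is precisely why exactly the four listed families appear. The split and Hermitian-tube-type cases are the cleanest, handled via the principal $\mathfrak{sl}_2$ and via Jordan-algebra Cayley transforms respectively, while the $\SO(p,q)$ case requires tracking several nontrivial weight pieces to produce the sections $q_i$, and the quaternionic exceptional case demands an explicit root-theoretic computation inside $\mathfrak{f}_4$, $\mathfrak{e}_6$, $\mathfrak{e}_7$, $\mathfrak{e}_8$ to pin down the $l_i$. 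In every case, the single step I expect to be the most delicate is certifying \emph{closedness} of the image; openness and the construction of $\Psi$ itself are comparatively formal once the grading data is in place.
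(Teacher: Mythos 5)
The paper gives no proof of this statement: it is imported verbatim from \cite{Bradlow_Collier_Garcia-Prada_Gothen_Oliveira}, so there is nothing internal to compare your argument against. Your sketch does follow the strategy of that reference --- a magical $\mathfrak{sl}_2$-triple $(e,h,f)$ with $e\in\mathfrak{m}^{\CC}$, the $\mathrm{ad}(h)$-grading determining the twists $K^{m_c+1}$ and $K^{l_i+1}$, the Cayley partner $G'$ as a real form built from the centralizer data, and the assembly of the Higgs field from $e$, the extended $\phi'$, and the sections $q_i$ --- and your identification of the classification of magical triples as the reason exactly these four families occur is accurate.

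Two points in your outline are stated more casually than the actual argument can afford. First, the claim that polystability is ``automatically transported by $\Psi$'' because of the regularity of $e$ is not a one-line matter: one must compare the stability condition for $K^{m_c+1}$-twisted $G'$-Higgs bundles with that for $K$-twisted $G$-Higgs bundles through the change of structure group and twisting, and this comparison (in both directions, since $\Psi$ must be an isomorphism onto its image at the level of moduli) is one of the substantial technical components of the cited work. Second, openness is not merely a Zariski-open normal-form condition; it is established by showing $\Psi$ is a local isomorphism via an identification of deformation complexes on the two sides, which is also what controls the singularity comparison the present paper relies on elsewhere. Your instinct that closedness is the delicate half is reasonable, but for several of the families the correct mechanism is the properness/limit argument for polystable degenerations together with the matching of automorphism groups (compare the gauge-transformation bookkeeping the paper carries out explicitly in its $\SO(p,q)$ section), rather than purely a level set of a Toledo-type invariant --- the latter suffices for the Hermitian tube-type case but not uniformly.
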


    The map $\Psi$ is called the {\em Cayley map}, the connected components of $\mathcal{M}_K(G)$ in the image of the Cayley map are called {\em Cayley components}.

    This Cayley correspondence sheds a new light on the moduli space of Higgs bundles and therefore on the character varieties. In particular,  this correspondence generally detects {\em a priori} hidden topological invariants associated to the group $G'$. 
    For example, it was used in \cite{Aparicio-Arroyo_Bradlow_Collier_Garcia-Prada_Gothen_Oliveira} to identify connected components thanks to the topological invariant associated to $G'$. In general, the moduli space $\mathcal{M}_{K^n}(G)$ for $n\ge 2$ is
    `less singular' than $\mathcal{M}_{K}(G)$, due to the vanishing of the hypercohomology in degree two of the deformation complex. Therefore Cayley components are generally less singular than general components.

From the point of view of the character variety and higher rank Teichm\"uller theory, this correspondence is particularly interesting. Labourie \cite{Labourie} introduced the notion of Anosov representations, Guichard--Wienhard \cite{Guichard_Wienhard} introduced the notion of positivity and Guichard--Labourie--Wienhard \cite{Guichard_Labourie_Wienhard} the notion of positive  representations. Positive representations are Anosov and hence  discrete and faithful.
It is proved in \cite{Bradlow_Collier_Garcia-Prada_Gothen_Oliveira} that Cayley components (seen as components of the character varieties thanks to the non-abelian Hodge correspondence) contain positive Anosov representations. This, combined with other results in  \cite{Bradlow_Collier_Garcia-Prada_Gothen_Oliveira} and \cite{Guichard_Labourie_Wienhard}, implies that the components of the character variety corresponding to the Cayley components are higher rank Teichm\"uller components.

\section{Intersection cohomology}\label{sect_ic}
Intersection cohomology is a cohomology theory well suited to study singular varieties (see  Beilinson--Bernstein--Deligne--Gabber \cite{Beilinson_Bernstein_Deligne_Gabber}). To describe it, let $Y\xrightarrow{p} \Spec \CC$ be an irreducible variety. There exists a particular object in the bounded derived category  $\mathcal{D}^c_b(Y)$ of constructible sheaves on $Y$ called the intersection complex, which is  denoted by $\mathcal{IC}_Y$. Consider the  derived push-forward functor  $R p_*$, and the proper push-forward functor  $R p_!$.
The intersection cohomology, respectively compactly supported intersection cohomology,  is obtained by taking $R p_* \mathcal{IC}_Y [\dim Y]$, respectively $R p_! \mathcal{IC}_Y [\dim Y]$. These are elements in $\mathcal{D}^c_b\left(\Spec \CC\right)$, and are identified with graded complex vector spaces $\oplus_k IH^k(Y,\CC)$, respectively $\oplus_k IH_c^k(Y,\CC)$, with a mixed-Hodge structure. Recall that a mixed Hodge structure is given by a finite increasing filtration, called the weight filtration, with a pure  Hodge structure  on the complexified subquotients of this filtration (see \cite{saito}).
When the variety is smooth, the intersection complex is, up to a shift, a constant sheaf so that the intersection cohomology coincides with the usual cohomology. The mixed-Hodge numbers of those structures are encoded in the mixed-Hodge polynomial 
\[
MH\left(Y; u, v, t\right) = \sum_{i,j,k} \dim \IH^{i,j,k}\left(Y, \CC\right) u^i v^j t^k,
\]
respectively the compactly supported mixed-Hodge polynomial $MH_c\left(Y; u, v, t\right)$,
which admit as  important specializations  the $E$-polynomial
\[
E\left(X; u, v \right) := MH_c\left(Y; u, v, -1\right), 
\]
and the Poincar\'e polynomial
\[
P\left(Y; t\right) := MH\left(Y; 1, 1, -t\right) = \sum_k (-1)^k\dim IH^k\left(Y, \CC\right)t^k,
\]
respectively the Poincar\'e polynomial $P_c\left(Y; t\right)$ for compactly supported intersection cohomology.
The intersection cohomology of $Y$ is said to be pure if $IH^{i,j,k}\left(Y,\CC\right) = 0$ for $i + j \ne k$. This is the case for instance if $Y$ is a projective variety. In this case we can set $t=1$ in the mixed-Hodge polynomial without losing any information, 
\[
H(Y; u, v) := MH(Y; u, v, 1)
\]
is then called the Hodge polynomial. 

In some contexts, the intersection cohomology seems to be the natural generalization to singular varieties of the usual cohomology. For instance it still satisfies Poincar\'e duality.

\section{\texorpdfstring{Intersection cohomology of the moduli space of $K^l$-twisted Higgs bundles}{Intersection cohomology of the moduli space of twisted Higgs bundles}} \label{sect_ic_higgs_bundles}
In this section we recall previous results about cohomology and intersection cohomology of the moduli spaces of $K^l$-twisted Higgs bundles and $\PGL(n,\CC)$-Higgs bundles. The moduli spaces describing Cayley components all involve a twisting by $K^l$ with $l>1$, hence we also assume $l>1$ in this section.
\subsection{From cohomology to intersection cohomology}\label{subsect_ic_inv_d}
The connected component of the moduli space of $K^l$-twisted $\GL(n,\CC)$-Higgs bundles are indexed by the degree
\[
\mathcal{M}_{K^l}\left(\GL(n,\CC)\right) = \bigsqcup_{d\in \ZZ} \mathcal{M}^d_{K^l}\left(\GL(n,\CC)\right),
\]
and similarly for the moduli space of $\PGL(n,\CC)$--Higss bundles
\[
\mathcal{M}_{K^l}\left(\PGL(n,\CC)\right) = \bigsqcup_{d\in \ZZ/ n\ZZ} \mathcal{M}^d_{K^l}\left(\PGL(n,\CC)\right).
\]
The cohomology of the moduli space of Higgs bundles of rank $n$ and degree $d$ has been extensively studied, in particular for $n$ and $d$ coprime, in which case the moduli space  is smooth. Recursive formulas for arbitrary rank were given in \cite{Garcia-Prada_Heinloth,Garcia-Prada_Heinloth_Schmitt}. The first closed formula in any rank was obtained by Schiffmann \cite{Schiffmann}, and was generalized to twisted Higgs bundles by Mozgovoy--Schiffmann \cite{Mozgovoy_Schiffmann}. The proof relies on the Weil conjectures, and the cohomology is obtained by counting points of the moduli space over finite fields. The formula from Mozgovoy--Schiffmann was simplified by Mozgovoy--O'Gorman \cite{Mozgovoy_OGorman}. Before detailing this result we explain how to go from cohomology of the moduli space of $\GL(n,\CC)$-Higgs bundles in the coprime case to intersection cohomology in the general case for both $\GL(n,\CC)$ and $\PGL(n,\CC)$. Here the assumption $l>1$ is crucial, it is an instance of the general principle that moduli spaces of $L$-twisted Higgs bundles are better behaved and less singular when $\deg L > \deg K$. Since $\deg K^l > \deg K$, by a result of Maulik--Shen \cite{Maulik_Shen}, the intersection cohomology of $\mathcal{M}^d_{K^l}\left(\GL(n,\CC)\right)$ does not depend on the degree
\begin{equation*}
IH^*\left(\mathcal{M}^d_{K^l}\left(\GL(n,\CC)\right),\CC\right) \cong IH^*\left(\mathcal{M}^{d'}_{K^l}\left(\GL(n,\CC)\right),\CC\right) \text{ for all }d,\\ d'\in\ZZ.
\end{equation*}
In particular we can chose $d'$ coprime with $n$, then the moduli space is smooth and its intersection cohomology coincides with the usual cohomology
\[
IH^*\left(\mathcal{M}^d_{K^l}\left(\GL(n,\CC)\right),\CC\right) \cong H^*\left(\mathcal{M}^{d'}_{K^l}\left(\GL(n,\CC)\right),\CC\right)
\]
for any $d\in\ZZ$ and for any $d'$ coprime with $n$.
This is an example where the intersection cohomology seems clearly to be the natural cohomology to study singular varieties.
\subsection{\texorpdfstring{From $\GL(n,\CC)$ to $\PGL(n,\CC)$}{From GL_n to PGL_n}}\label{gltopgl}
For the reader's convenience we recall a well known result relating moduli spaces of $\GL(n,\CC)$-Higgs bundles and moduli spaces of $\PGL(n,\CC)$-Higgs bundles, it is explained by Mauri \cite{Mauri} and Mauri--Felisetti \cite{Mauri_Felisetti} in the degree $0$ case.
\begin{lemma}\label{lemma_PGL_GL}
    The intersection cohomology of the moduli spaces of $L$-twisted $\GL(n,\CC)$-Higgs bundles and $\PGL(n,\CC)$-Higgs bundles are related by
    \[
    IH^*\left(\mathcal{M}^d_{L}\left(\GL(n,\CC)\right),\CC\right)\cong IH^*\left(\mathcal{M}^d_{L}\left(\PGL(n,\CC)\right),\CC\right)\otimes H^*\left(\Pic^0(X),\CC\right)
    \]
    and similarly for compactly supported intersection cohomology.
\end{lemma}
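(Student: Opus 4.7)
The plan is to realise $\mathcal{M}^d_L(\GL(n,\CC))$ as the étale quotient of the product of $\Pic^0(X)$, an $\SL_n$-Higgs moduli with a fixed determinant, and a ``trace space'' $H^0(X,L)$, and to identify $\mathcal{M}^d_L(\PGL(n,\CC))$ as the quotient of the same $\SL_n$ factor by the finite group $\Gamma := \Pic^0(X)[n]$ of $n$-torsion line bundles.

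First I would fix $L_1\in\Pic^d(X)$ and define
\[
\Phi : \Pic^0(X) \times \mathcal{M}^{L_1}_L(\SL(n,\CC)) \times H^0(X,L) \longrightarrow \mathcal{M}^d_L(\GL(n,\CC))
\]
by $(M,(E_0,\phi_0),\theta) \mapsto \bigl(M\otimes E_0,\ \phi_0+\tfrac{\theta}{n}\Id_{E_0}\bigr)$, where $\mathcal{M}^{L_1}_L(\SL(n,\CC))$ denotes the moduli of $L$-twisted Higgs bundles with determinant $L_1$ and traceless Higgs field. I would then check that $\Phi$ is a surjective finite Galois cover with free $\Gamma$-action given by $\zeta\cdot(M,(E_0,\phi_0),\theta)=(M\otimes\zeta^{-1},(E_0\otimes\zeta,\phi_0),\theta)$. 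Indeed, given $(E,\phi)\in\mathcal{M}^d_L(\GL(n,\CC))$, one recovers $\theta=\tr\phi$ and $\phi_0=\phi-\tfrac{\theta}{n}\Id_E$, and picks $M$ among the $n$-th roots of $\det E\otimes L_1^{-1}\in\Pic^0(X)$ (which form a $\Gamma$-torsor), setting $E_0:=E\otimes M^{-1}$, which has determinant $L_1$. Stability is preserved because tensoring by a line bundle and shifting the Higgs field by a scalar preserve (semi)stability.

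Since $\Gamma$ acts freely on the source, intersection cohomology of the target equals the $\Gamma$-invariant part of intersection cohomology of the source; applying the Künneth formula (legitimate as $\Pic^0(X)$ is smooth and $H^0(X,L)$ is a contractible affine space) gives
\[
IH^*\bigl(\mathcal{M}^d_L(\GL(n,\CC)),\CC\bigr) \cong \bigl[\,H^*(\Pic^0(X),\CC)\otimes IH^*(\mathcal{M}^{L_1}_L(\SL(n,\CC)),\CC)\,\bigr]^{\Gamma}.
\]
The action of $\Gamma$ on $\Pic^0(X)$ is by translations, which are homotopic to the identity in the connected complex torus $\Pic^0(X)$, so $\Gamma$ acts trivially on $H^*(\Pic^0(X),\CC)$. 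The invariants therefore factor as $H^*(\Pic^0(X),\CC)\otimes IH^*(\mathcal{M}^{L_1}_L(\SL(n,\CC)),\CC)^{\Gamma}$. Finally, the central isogeny $\SL(n,\CC)\to\PGL(n,\CC)$ with kernel $\mu_n$ yields $\mathcal{M}^d_L(\PGL(n,\CC))=\mathcal{M}^{L_1}_L(\SL(n,\CC))/\Gamma$, and the standard identification of intersection cohomology under a finite group quotient gives $IH^*(\mathcal{M}^{L_1}_L(\SL(n,\CC)),\CC)^{\Gamma}=IH^*(\mathcal{M}^d_L(\PGL(n,\CC)),\CC)$, proving the lemma. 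The compactly supported version follows from the same argument, replacing $IH^*$ by $IH^*_c$ throughout (and using that $\Pic^0(X)$ is compact, so $H^*_c=H^*$ for it).

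The main delicate point is verifying rigorously that $\Phi$ is a free finite Galois cover at the level of coarse moduli (in particular that $S$-equivalence classes match up under $\Phi$ and the $\Gamma$-action), together with the identification $IH^*(X/\Gamma,\CC)=IH^*(X,\CC)^{\Gamma}$ for a finite group $\Gamma$ acting on an irreducible complex variety $X$, which is a standard consequence of the decomposition theorem applied to the finite morphism $X\to X/\Gamma$ but which must be invoked coherently for all strata.
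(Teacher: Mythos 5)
Your proposal is correct and follows essentially the same route as the paper: both realise $\mathcal{M}^d_L(\GL(n,\CC))$ as the quotient of $\mathcal{F}^{d,n}_L(L_1)\times\Pic^0(X)\times H^0(X,L)$ (your $\mathcal{M}^{L_1}_L(\SL(n,\CC))$ is the paper's fixed-determinant, traceless fibre) by the group $\Gamma$ of $n$-torsion line bundles, take $\Gamma$-invariants of intersection cohomology, use connectedness of $\Pic^0(X)$ to trivialise the action on $H^*(\Pic^0(X),\CC)$, and identify the $\PGL(n,\CC)$ moduli space as $\mathcal{F}^{d,n}_L(L_1)/\Gamma$. The paper packages the same construction as a cartesian diagram exhibiting $(\det,\tr)$ as an \'etale-locally trivial fibration and cites Kirwan \cite[Lemma 2.12]{Kirwan1986} for the invariants statement you flag as the delicate point, but there is no substantive difference.
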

\begin{proof}
Consider the map
\[
\begin{array}{cccc}
      \pi & : \mathcal{M}^d_L\left(\GL(n,\CC)\right) &\to & \Pic^d(X)\times H^0(X,L)   \\
    & (E,\phi) & \mapsto &  \left(\det E, \tr \phi\right)
\end{array}
\]
and a fibre $\mathcal{F}^{d,n}_L(M) := \pi^{-1}\left(M,0\right)$ with $M$ a line bundle of degree $d$. This the moduli space of $L$-twisted Higgs bundles with fixed determinant and traceless Higgs field. Notice that when $d=0$ this fibre is isomorphic to the moduli space $\mathcal{M}_L\left(\SL_n(\CC)\right)$. The map $\pi$ is an étale-locally trivial fibration, indeed the following diagram is cartesian
\[
\begin{tikzcd}
\mathcal{F}^{d,n}_L(M)\times \Pic^0(X)\times H^0(X,L) \arrow[r,"p'"] \arrow[swap,d,"\pi'"]&\mathcal{M}^d_L\left(\GL(n,\CC)\right)
\arrow[d, "\pi"]\\
\Pic^0(X)\times H^0(X,L)\arrow[swap,r,"p"]& \Pic^d(X)\times H^0(X,L) \\
\end{tikzcd}
\]
with $\pi'$ the projection, $p(N,s) := N^n\otimes M$ and $p'\left((E,\phi),N,s\right) := \left(E\otimes N,\phi + \frac{s}{n}\Id_E\right)$. The map $p'$ is a Galois cover with group $\Gamma \subset \Pic^0(X)$ the group of line bundles of order $n$ acting by
\[
\gamma.\left((E,\phi), N , s\right) := \left((\gamma\otimes E, \phi), \gamma^{-1}\otimes N, s\right).
\]
Therefore the intersection cohomology of $\mathcal{M}^d_L\left(\GL(n,\CC)\right)$ is the $\Gamma$-invariant part of the intersection cohomology of the Galois cover (see Kirwan \cite[Lemma 2.12]{Kirwan1986}),
\begin{equation*}
IH^*\left(\mathcal{M}^d_L\left(\GL(n,\CC)\right),\CC\right)=IH^*\left(\mathcal{F}^{d,n}_L(M)\times \Pic^0(X)\times H^0(L),\CC\right)^{\Gamma}.
\end{equation*}
The $\Gamma$-action on $H^*\left(\Pic^0(X),\CC\right)$ is a restriction of a $\Pic^0(X)$-action hence is trivial and
\[
IH^*\left(\mathcal{M}^d_L\left(\GL(n,\CC)\right),\CC\right) = IH^*\left(\mathcal{F}^{d,n}_L(M),\CC\right)^{\Gamma}\otimes H^*\left(\Pic^0(X),\CC\right).
\]
The conclusion follows noticing $\mathcal{F}^{d,n}_L(M)/\Gamma \cong \mathcal{M}_L\left(\PGL(n,\CC)\right) $.
\end{proof}
\begin{remark}\label{remark_Hodge_pol}
    The intersection cohomology of $\mathcal{M}^d_{K^l}\left(\GL(n,\CC)\right)$ is pure (see Mauri \cite[Proposition 2.4]{Mauri}). Lemma \ref{lemma_PGL_GL} has the following consequence in terms of Hodge polynomial for intersection cohomology
    \[
    H\left(\mathcal{M}^d_{K^l}\left(\GL(n,\CC)\right); u,v\right) =     H\left(\mathcal{M}^d_{K^l}\left(\PGL(n,\CC)\right); u,v\right)     H\left(\Pic^0(X); u,v\right). 
    \]
    Also true for compactly supported intersection cohomology.
   The cohomology of $\Pic^0(X)$ is well-known and hence
    \[
        H\left(\mathcal{M}^d_{K^l}\left(\PGL(n,\CC)\right); u,v\right) = \frac{H\left(\mathcal{M}^d_{K^l}\left(\GL(n,\CC)\right); u,v\right)  }{(1+u)^g (1+v)^g}.
    \]
\end{remark}

\subsection{From point counting to cohomology}
As mentioned above, in the coprime case, the cohomology of the moduli space of (twisted) Higgs bundles has been computed by counting points over finite fields by Mozgovoy--O'Gorman \cite{Mozgovoy_OGorman} relying on previous work by Mozgovoy--Schiffmann \cite{Mozgovoy_Schiffmann} and Schiffmann \cite{Schiffmann}. 

In the remaining of this section we recall the formula they obtained in the particular case of a twisting by $K^l$ with $l>1$. The formula involves generating series defined by summing over partitions. We denote by $\mathcal{P}$ the set of partitions of integers. For a partition $\lambda$ we denote by $d(\lambda)$ its Young diagramm. For instance $\lambda = (5,4,2)$ is a partition of $11 = |\lambda|$ with Young diagram
\[
\begin{Young}
      & $x$ & &  &  \cr
      &  &  & \cr
      & \cr
\end{Young}.
\]
For a box $x$ in a Young diagram $d(\lambda)$, we denote by $a(x)$ the arm length and by $l(x)$ the leg length, for instance in the previous diagram $a(x)=3$ and $l(x)=2$.
In order to count Higgs bundles we work over a finite field $\mathbb{F}_q$, and $X$ is a curve of genus $g$ defined over $\mathbb{F}_q$.
Let $(\alpha_i)_{1\le i \le g}$ be the Weil numbers of $X$. Consider the generating series
\begin{multline*}
\Omega(T,z) := \\ \sum_{\lambda \in \mathcal{P}} T^{|\lambda|} \prod_{x\in d(\lambda)}(-q^{a(x)}z^{l(x)})^{(l-1)(2 g - 2)}\frac{\prod_{i=1}^g(q^{a(x)}-\alpha_i^{-1} z^{l(x)+1})(q^{a(x)+1}-\alpha_i z^{l(x)})}{(q^{a(x)}-z^{l(x)+1})(q^{a(x)+1}-z^{l(x)})},
\end{multline*}
and
\[
\mathbb{H}(T,z) := (q-1)(1-z) \Log \Omega(T,q,z).
\]
The symbol $q$ can be understood either as the cardinal of the field $\mathbb{F}_q$ or as a formal variable. For a definition of the plethystic logarithm see \cite[Section 2]{Mozgovoy_OGorman}. After setting $z=1$, $\mathbb{H}(T,z)$ becomes the generating series for the Donaldson--Thomas invariants $\Omega_{n}$ of the moduli space of $K^l$-twisted Higgs bundles (\cite{KS})
\[
\mathbb{H}(T,1) = \sum_{n} q^{(1-l)(g-1)n}\Omega_n T^n.
\]

Here $\Omega_n$ is a rational function in $q$, and all the Weil numbers $\alpha_i$ and $\alpha_i^{-1}$ of $X$.
In particular for $n$ and $d$ coprime, $q^{l(g-1)n^2}\Omega_n$ is  the number of $\mathbb{F}_q$-points of the moduli space of $K^l$-twisted Higgs bundles. In the spirit of Weil conjecture, this information about the number of points gives some cohomological information, here the compactly supported Poincar\'e polynomial. The proof is the same as in the usual Higgs bundles case (see Schiffmann \cite[Corollary 1.3]{Schiffmann}). 
\begin{theorem}[Mozgovoy--O'Gorman \cite{Mozgovoy_OGorman}]\label{th_Mozgovoy_OGorman}
    For $n$ and $d$ coprime, the Poincar\'e polynomial for compactly supported cohomology of $\mathcal{M}^d_{K^l}\left(\GL(n,\CC)\right)$ is obtained by setting $q=t^2$ and $\alpha_i=t$ for $1\le i \le g$ in $q^{l(g-1)n^2}\Omega_n$
    \[
    \sum_k (-1)^k\dim H_c^i\left(\mathcal{M}^d_{K^l}\left(\GL(n,\CC)\right)\right)t^k= {t^{2l(g-1)n^2}\Omega_n}_{\left|q = t^2,\  \alpha_i = t\right.}.
    \]
\end{theorem}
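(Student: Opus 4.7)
The approach mirrors Schiffmann's proof for untwisted Higgs bundles \cite[Corollary 1.3]{Schiffmann}, adapted to the $K^l$-twisted setting. The three main ingredients are: an arithmetic count of stable $K^l$-twisted Higgs bundles over a finite field, a purity statement for the compactly supported cohomology in the coprime case, and the Grothendieck--Lefschetz trace formula to extract Betti numbers from the count.

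First, I would spread $X$ out to a smooth projective curve over $\Spec \ZZ[1/N]$ for some $N$, so that for almost all residue fields $\Fq$ one obtains a good arithmetic model whose Weil numbers $(\alpha_i)_{1\le i\le g}$ specialize those of the generic fibre; the Betti numbers of the moduli space are invariant under this specialization. Because $\gcd(n,d)=1$, semistability coincides with stability, so $\mathcal{M}^d_{K^l}(\GL(n,\CC))$ is a smooth quasi-projective variety over $\CC$ (and, via the arithmetic model, also over $\Fq$). The point count $|\mathcal{M}^d_{K^l}(\GL(n,\CC))(\Fq)|$ was carried out by Mozgovoy--Schiffmann via a Hall-algebra / asymptotic-stabilization argument on the stack of all (not necessarily semistable) $K^l$-twisted Higgs bundles, and simplified by Mozgovoy--O'Gorman; the closed formula is exactly $q^{l(g-1)n^2}\Omega_n$, where $\Omega_n$ is the coefficient of $T^n$ in $\mathbb{H}(T,1)$.

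Second, I would establish purity of $H^*_c(\mathcal{M}^d_{K^l}(\GL(n,\CC)),\CC)$. The Hitchin morphism is proper (by Nitsure's theorem, which extends without change to the $K^l$-twisted case), so the nilpotent cone is a proper closed subvariety and coincides with the attracting set of the $\CC^*$-action scaling Higgs fields. A Bialynicki-Birula decomposition then expresses $H^*_c$ of the total space as a shift of the cohomologies of the smooth projective components of the $\CC^*$-fixed locus (the variations of Hodge structure). Since smooth projective varieties have pure cohomology, $H^*_c$ of the moduli space is pure.

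Finally, with purity in hand, the Grothendieck--Lefschetz trace formula
\[
|\mathcal{M}^d_{K^l}(\GL(n,\CC))(\Fq)| = \sum_k (-1)^k \tr\left(\mathrm{Frob}_q \mid H^k_c(\mathcal{M}_{\overline{\Fq}},\qlbar)\right),
\]
combined with the fact that Frobenius eigenvalues on the pure weight-$k$ piece are algebraic numbers whose archimedean absolute values equal $q^{k/2}$, and that these eigenvalues are polynomials in $q$ and in the Weil numbers $\alpha_i$ of the curve (a structural feature inherited from the Hall-algebra derivation, whose building blocks are Kuenneth factors coming from $H^1(X)$), lets one recover the Poincar\'e polynomial for compactly supported cohomology by the substitution $q=t^2$, $\alpha_i=t$. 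The normalization $q^{l(g-1)n^2}$ becomes $t^{2l(g-1)n^2}$ exactly as in the statement. The main technical obstacle is the purity step: while the Hitchin map and the $\CC^*$-action render the argument conceptually clean, one has to check in the twisted setting that the $\CC^*$-fixed components are smooth and projective, generalizing the analysis of Hodge-type chains of Higgs bundles from Hitchin's original work, and that the weight filtration on $H^*_c$ assembled from these pieces is compatible with the Galois action used in the trace formula.
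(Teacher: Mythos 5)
Your proposal follows exactly the route the paper has in mind: the paper gives no independent proof but simply cites Mozgovoy--O'Gorman for the point count $q^{l(g-1)n^2}\Omega_n$ and remarks that ``the proof is the same as in the usual Higgs bundles case (see Schiffmann [Corollary 1.3])'', i.e.\ purity of $H^*_c$ via the $\CC^*$-action and the properness of the Hitchin map, followed by the Grothendieck--Lefschetz trace formula and the substitution $q=t^2$, $\alpha_i=t$. Your sketch is a correct and somewhat more detailed expansion of that same argument, with the purity step rightly flagged as the only point requiring verification in the twisted setting.
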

\begin{corollary}\label{cor_pgl}
    For any rank $n$ and any degree $d$, the Poincar\'e polynomial (see Section \ref{sect_ic}) for compactly supported \textit{intersection} cohomology of $\mathcal{M}^d_{K^l}\left(\GL(n,\CC)\right)$ is
    \[
    P_c\left(\mathcal{M}^d_{K^l}\left(\GL(n,\CC)\right)\right) = {t^{2l(g-1)n^2}\Omega_n}_{\left|q = t^2,\  \alpha_i = t\right.},
    \]
    and for the group $\PGL(n,\CC)$
    \[
    P_c\left(\mathcal{M}^d_{K^l}\left(\PGL(n,\CC)\right)\right) = \frac{{t^{2l(g-1)n^2}\Omega_n}_{\left|q = t^2,\  \alpha_i = t\right.}}{(t-1)^{2 g}}.
    \]
\end{corollary}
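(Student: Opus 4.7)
The plan is to assemble the three ingredients already at our disposal: Maulik--Shen's degree-independence of intersection cohomology (recalled in Section \ref{subsect_ic_inv_d}), the Mozgovoy--O'Gorman point-count of Theorem \ref{th_Mozgovoy_OGorman} in the coprime case, and the $\GL(n,\CC)$-to-$\PGL(n,\CC)$ comparison of Lemma \ref{lemma_PGL_GL}. None of the individual steps involves a new idea; the work is just to glue them together compatibly, paying attention to compact supports.

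For the first formula, I would begin by fixing an arbitrary degree $d \in \ZZ$ and choosing $d'$ coprime to $n$. Since $l > 1$ we have $\deg K^l > 2g-2$, so Maulik--Shen applies and gives
\[
IH_c^*\bigl(\mathcal{M}^d_{K^l}(\GL(n,\CC)),\CC\bigr) \cong IH_c^*\bigl(\mathcal{M}^{d'}_{K^l}(\GL(n,\CC)),\CC\bigr).
\]
Because $(n,d')=1$, the moduli space $\mathcal{M}^{d'}_{K^l}(\GL(n,\CC))$ is smooth, so its intersection complex is (up to shift) the constant sheaf and compactly supported intersection cohomology agrees with ordinary compactly supported cohomology. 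Theorem \ref{th_Mozgovoy_OGorman} then computes the alternating sum of Betti numbers, which in our sign convention is precisely $P_c(\mathcal{M}^d_{K^l}(\GL(n,\CC));t)$, yielding the first displayed formula.

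For the $\PGL(n,\CC)$ formula, I would invoke Lemma \ref{lemma_PGL_GL} in its compactly supported variant, which yields the K\"unneth-type identity
\[
IH_c^*\bigl(\mathcal{M}^d_L(\GL(n,\CC)),\CC\bigr) \cong IH_c^*\bigl(\mathcal{M}^d_L(\PGL(n,\CC)),\CC\bigr) \otimes H_c^*\bigl(\Pic^0(X),\CC\bigr).
\]
Taking the alternating sums of the graded dimensions and using that $\Pic^0(X)$ is a smooth projective abelian variety of complex dimension $g$, we get
\[
P_c\bigl(\Pic^0(X);t\bigr) = \sum_{k=0}^{2g}(-1)^k\binom{2g}{k}t^k = (1-t)^{2g} = (t-1)^{2g},
\]
the last equality because $2g$ is even. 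Dividing by this factor and specialising $L=K^l$ gives the second formula.

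The only substantive point to check along the way is that the $\Gamma$-cover argument in the proof of Lemma \ref{lemma_PGL_GL} goes through with compact supports, which follows because $\Gamma$ acts trivially on $H_c^*(\Pic^0(X),\CC) = H^*(\Pic^0(X),\CC)$ (the action extends to the connected group $\Pic^0(X)$) and because the Galois-cover identification of intersection cohomology with invariants works identically for $Rp_!$ and $Rp_*$. Once this is granted there is no real obstacle; the main conceptual content is that Maulik--Shen's theorem lets us trade the singular, fixed-degree moduli space for a smooth one of coprime degree, where the Mozgovoy--O'Gorman count applies directly.
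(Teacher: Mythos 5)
Your proof is correct and follows exactly the same route as the paper, which simply cites Theorem \ref{th_Mozgovoy_OGorman}, the Maulik--Shen degree-independence of Section \ref{subsect_ic_inv_d}, and the $\GL$-to-$\PGL$ comparison of Lemma \ref{lemma_PGL_GL} (via Remark \ref{remark_Hodge_pol}). Your version just spells out the gluing, including the correct evaluation $P_c(\Pic^0(X);t)=(t-1)^{2g}$.
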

\begin{proof}
    The result follows from Theorem \ref{th_Mozgovoy_OGorman}, Section \ref{subsect_ic_inv_d} and Remark \ref{remark_Hodge_pol}.
\end{proof}

\section{\texorpdfstring{$\SO(p,q)$-Higgs bundles}{{SO(p,q)}-Higgs bundles}}
\subsection{Generalities}
Similar to the case of $\GL(n,\CC)$, for other  classical groups we can  work with vector bundles rather than
principal bundles.  For instance for the group $\SO(p,q)$ and for its identity component $\SO_0(p,q)$ one has the  the following definition.
\begin{definition}\label{def_sopq_Higgs}
  Let $L$ be a line bundle on $X$. An $L$-{\em twisted $\SO(p,q)$-Higgs bundle} is a triple $(V,W,\phi)$ consisting of
holomorphic vector bundles $V$ and $W$ of ranks $p$ and $q$ respectively,  endowed with  orthogonal structures, such that $\det W = \det V$, and  a Higgs field $\phi : V \to W \otimes L$. An $L$-{\em twisted $\SO_0(p,q)$-Higgs bundle} is an  $L$-{\em twisted $\SO(p,q)$-Higgs bundle} with $\det V = \det W = \mathcal{O}_X$.
\end{definition}
For $\SO(p,q)$, the Cayley map  established in  \cite{Aparicio-Arroyo_Bradlow_Collier_Garcia-Prada_Gothen_Oliveira} is given by
\begin{equation}\label{eq_cayley_map}
\Psi : \mathcal{M}_{K^p}\left(\SO(1, q-p+1)\right)\times \bigoplus_{j=1}^{p-1}H^0(K^{2j}) \to \mathcal{M}_{K}\left(\SO(p,q)\right).
\end{equation}
In the following we recall this correspondence without worrying about the Higgs fields and we also study the case of $\SO_0(p,q)$, this will be used later to study Cayley components for $\SO_0(n,n+2)$. A $K^p$-twisted $\SO(1,q-p+1)$-Higgs bundle is determined by a triple $(I, \widehat{W},\widehat{\eta})$, with $\widehat{W}$ a rank $q-p+1$ orthogonal bundle, $I$ is the line bundle $\det \widehat{W}$ and  $\widehat{\eta}$ is the Higgs field. To the class of such a twisted Higgs bundle, is associated a $\SO(p,q)$-Higgs bundle $(V,W,\eta)$ with
\begin{eqnarray*}
    V := I\otimes\mathcal{K}_{p-1}, \\
    W := \widehat{W} \oplus I\otimes \mathcal{K}_{p-2},
\end{eqnarray*}
where
\[
\mathcal{K}_n:=K^n\oplus K^{n-2}\oplus\dots \oplus K^{-n+2}\oplus K^{-n}.
\]
To prove that this induces an isomorphism at the level of components of moduli spaces it is checked in \cite{Aparicio-Arroyo_Bradlow_Collier_Garcia-Prada_Gothen_Oliveira} that $SO(1,q-p+1)$-gauge transformations of $(I, \widehat{W},\widehat{\eta})$ are in one to one correspondence with $SO(p,q)$-gauge transformation of $(V, W, \eta)$. The correspondence between gauge transformations is the following.

A $SO(1,q-p+1)$-gauge transformations $(\det g_{\widehat{W}},g_{\widehat{W}})$ is sent to the $SO(p,q)$-gauge transformation 
\[
\left(\det (g_{\widehat{W}}) \Id_V, \left(\begin{array}{cc}
    g_{\widehat{W}} &  0\\
     0 & \det (g_{\widehat{W}}) \Id_{\mathcal{K}_{p-2}} 
\end{array}{}\right)  \right).
\]

It is then argued that if $(g_V,g_W)$ is a $\SO(p,q)$-gauge transformation of $(V,W,\eta)$ then $g_W$ has the following form
\[
g_W = \left(\begin{array}{cc}
  g_{\widehat{W}}   & 0 \\
    0 & g_{\mathcal{K}_{p-2}}
\end{array}\right),
\]
with $g_{\widehat{W}}$ an orthogonal transformation of $\widehat{W}$ and $g_{\mathcal{K}_{p-2}}$ an orthogonal transformation of $\mathcal{K}_{p-2}$. Moreover they satisfy
\begin{equation}\label{eq_pm_id}
    (g_V,g_{\mathcal{K}_{p-2}}) = \pm (\Id_V,\Id_{\mathcal{K}_{p-2}}).
\end{equation}

When we consider the identity component $\SO_0(p,q)$ instead of the whole group $\SO(p,q)$ then the form of the Cayley correspondence depends on the parity of $p$.
\begin{proposition}\label{prop_sopq0}
For $p$ odd, the Cayley correspondence provides an open and closed embedding
\begin{equation}
\Psi_0 : \mathcal{M}_{K^p}\left(\SO_0(1, q-p+1)\right)\times \bigoplus_{j=1}^{p-1}H^0(K^{2j}) \to \mathcal{M}_{K}\left(\SO_0(p,q)\right).
\end{equation}
For $p$ even, the Cayley correspondence provides an open and closed embedding
\begin{equation}
\Psi_0 : \mathcal{M}_{K^p}\left(\SO(1, q-p+1)\right)\times \bigoplus_{j=1}^{p-1}H^0(K^{2j}) \to \mathcal{M}_{K}\left(\SO_0(p,q)\right).
\end{equation}
\end{proposition}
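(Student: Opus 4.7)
The plan is to derive Proposition \ref{prop_sopq0} as a determinant refinement of the Cayley correspondence \eqref{eq_cayley_map}: I identify which Higgs bundles in the image of $\Psi$ satisfy the $\SO_0(p,q)$-conditions $\det V = \det W = \mathcal{O}_X$, and show that the corresponding source subspace is precisely the one appearing in the statement.

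The key computation is the determinant of the bundles $V$ and $W$ produced by the construction recalled in the excerpt. Since the $K$-exponents in $\mathcal{K}_n = K^n \oplus K^{n-2} \oplus \cdots \oplus K^{-n}$ are symmetric about zero, $\det \mathcal{K}_n = \mathcal{O}_X$. Using that $\mathcal{K}_{p-1}$ has rank $p$ and $\mathcal{K}_{p-2}$ has rank $p-1$, one obtains
\[
\det V \;=\; I^p \otimes \det \mathcal{K}_{p-1} \;=\; I^p, \qquad \det W \;=\; (\det \widehat{W}) \otimes I^{p-1} \otimes \det \mathcal{K}_{p-2} \;=\; I^p.
\]
Hence $\det V = \det W$ is automatic (this is the $\SO(p,q)$-condition motivating the original Cayley map), and the $\SO_0(p,q)$-condition reduces to $I^p = \mathcal{O}_X$. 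Since $\widehat{W}$ is orthogonal, $I = \det \widehat{W}$ satisfies $I^2 = \mathcal{O}_X$. For $p$ odd, $I^p = I$, so $I^p = \mathcal{O}_X$ forces $I = \mathcal{O}_X$, i.e.\ $(I, \widehat{W}, \widehat{\eta})$ defines a $K^p$-twisted $\SO_0(1, q-p+1)$-Higgs bundle. For $p$ even, $I^p = (I^2)^{p/2} = \mathcal{O}_X$ automatically, so every $\SO(1, q-p+1)$-Higgs bundle is admissible. This explains the dichotomy in the statement.

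To upgrade this set-theoretic matching to an open and closed embedding, I transfer the gauge-theoretic bijection of \cite{Aparicio-Arroyo_Bradlow_Collier_Garcia-Prada_Gothen_Oliveira} recalled in the excerpt to the $\SO_0$-setting. Using \eqref{eq_pm_id} I write $g_V = \epsilon\,\Id_V$ and $g_{\mathcal{K}_{p-2}} = \epsilon\,\Id_{\mathcal{K}_{p-2}}$ with $\epsilon \in \{\pm 1\}$; combined with the explicit correspondence from the excerpt, the sign satisfies $\epsilon = \det g_{\widehat{W}}$. One then computes $\det g_V = \epsilon^p$ and $\det g_W = (\det g_{\widehat{W}})\,\epsilon^{p-1}$. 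Imposing the $\SO_0(p,q)$-gauge condition $\det g_V = \det g_W = 1$ forces, for $p$ odd, $\epsilon = 1$ and $\det g_{\widehat{W}} = 1$, recovering exactly the $\SO_0(1,q-p+1)$-gauge group; for $p$ even the two conditions collapse to $\det g_{\widehat{W}} = \epsilon$, which is exactly the defining relation of the $\SO(1, q-p+1)$-gauge group $S(\mathrm{O}(1,\CC) \times \mathrm{O}(q-p+1,\CC))$. Since $\Psi$ is already an open and closed embedding by Theorem \ref{th_cayley}, the induced restriction $\Psi_0$ is an open and closed embedding as well.

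The main obstacle is the bookkeeping in the $p$ even case, where the sign $\epsilon$ must be threaded consistently through the bijection to ensure that each $\SO(1,q-p+1)$-gauge orbit of the source corresponds to exactly one $\SO_0(p,q)$-gauge orbit of the image; the $p$ odd case then reduces to a transparent restriction once both $\epsilon$ and $\det g_{\widehat W}$ are pinned to $1$.
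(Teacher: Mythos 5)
Your proposal is correct and follows essentially the same route as the paper: both arguments reduce the $\SO_0(p,q)$-conditions to the triviality of $I^p=(\det\widehat W)^p$ and then match gauge groups via relation \eqref{eq_pm_id}, with the parity of $p$ deciding whether the sign $\epsilon$ (equivalently $\det g_{\widehat W}$) is forced to be $+1$. Your version merely makes the determinant computation $\det V=\det W=I^p$ and the identity $\epsilon=\det g_{\widehat W}$ more explicit than the paper does.
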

\begin{proof}
For $p$ even it follows from $(\det W)^2\cong \mathcal{O}_X$ that, with the previous construction, $(V,W)$ defines an $\SO_0(p,q)$-bundle. Moreover for a $\SO(p,q)$-gauge transformation $(g_V,g_W)$ of $(V,W)$, relation \eqref{eq_pm_id} implies that $\det g_V = 1$. Therefore $\SO(p,q)$-gauge transformation of $(V,W)$ are exactly $\SO_0(p,q)$-gauge transformations, and the Cayley components can be seen either as components of $\mathcal{M}_K\left(\SO(p,q)\right)$ or as components of $\mathcal{M}_K\left(\SO_0(p,q)\right)$.

For $p$ odd, consider the previous construction but starting with a twisted $\SO_0(1,q-p+1)$-Higgs bundle instead of a general twisted $\SO(1,q-p+1)$-Higgs bundle. then $I$ is trivial and $(V,W,\eta)$ defines a $\SO_0(p,q)$-Higgs bundle. Moreover a $\SO_0(1,q-p+1)$-gauge transformation $(\Id,g_{\widehat{W}})$ is sent to a $\SO_0(p,q)$-gauge transformation. Now consider a $\SO_0(p,q)$-gauge transformation $(g_V,g_W)$ of $(V,W,\eta)$. As $p$ is odd, the sign in equation \eqref{eq_pm_id} is necessarily a plus, and hence  $(g_V,g_W)$ comes from a $\SO_0(1,q-p+1)$-gauge transformation $(\Id_{\det\widehat W},g_{\widehat{W}})$.

\end{proof}
\begin{remark}
    For $p$ even, when we are interested in the Cayley components for the group $\SO_0(p,q)$, the Cayley partner is still the whole group $\SO(1,q-p+1)$ and not the identity component $\SO_0(1,q-p+1)$. There is still a map
    \begin{equation*}
\mathcal{M}_{K^p}\left(\SO_0(1, q-p+1)\right)\times \bigoplus_{j=1}^{p-1}H^0(K^{2j}) \to \mathcal{M}_{K}\left(\SO_0(p,q)\right).
\end{equation*}
    but it is not an embedding and its image consists only of some Cayley components, not all of them.
\end{remark}
\subsection{\texorpdfstring{$\SO(n,n+1)$-Higgs bundles}{SO(n,n+1)-Higgs bundles} } \label{section_SO}
We consider a particular component of the moduli space of $K^n$-twisted $\SO(1,2)$-Higgs bundles studied by Collier \cite{Collier}. By means of  the Cayley correspondence, this will provide information about the moduli space of $\SO(n,n+1)$-Higgs bundles. In particular for $n=2$ the description of the component is closely related to the work of Gothen \cite{Gothen01} on $\Sp(4,\RR)$-Higgs bundles and of Alessandrini--Collier \cite{Alessandrini_Collier} on $\PSp(4,\RR)$-Higgs bundles. The corresponding Cayley components are called Collier--Gothen components. The relation comes from the fact that the component of the identity in $\SO(2,3)$ is isomorphic to $\PSp(4,\RR)$. 

Specializing $p=1$ and $q=2$ in Definition \ref{def_sopq_Higgs}, a $K^n$-twisted $\SO(1,2)$-Higgs bundle is then determined by  a holomorphic orthogonal vector bundle $W$ of rank $2$ and a Higgs field 
\[
\phi : \det W \to W\otimes K^n.
\]
We single out a component of $\mathcal{M}_{K^n}\left(\SO(1,2)\right)$ first by fixing the first Stiefel--Whitney class of $W$ to be zero. Then 
\[
W\cong M \oplus M^{-1}
\]
for some line bundle $M$. The connected component we are interested in is obtained by furthermore fixing $\deg M = 0$. The Higgs field $\phi$ is now determined by two morphisms
\[
\mu : \mathcal{O}_X \to M^{-1}\otimes K^n \text{ and }\nu : \mathcal{O}_X \to M \otimes K^n.
\]
Let
\[
\mathcal{F}=\left\lbrace \left(M, \mu, \nu \right) \left| \right. M\in\Pic^0(X), \mu \in H^0\left(X,M\inv K^n\right), \nu \in H^0\left(X,MK^n) \right.\right\rbrace.
\]
\begin{remark}
  For $n \ge 2$, $\mathcal{ F}$ is the total space of a vector bundle of rank $2 D$
  on $\Pic^0(X)$  with
  \[
D = h^0(M^{-1}K^n) = h^0(M K^n) = (2n -1 ) ( g - 1) .
\]
However, when $n=1$ the dimension of the fibre is not constant over $\Pic^0(X)$, there is a jump at $M=\mathcal{O}_X$ the trivial line bundle,
\[
h^0(M^{-1}K)= \left\lbrace
\begin{array}{cc}
g & \text{ if } M = \mathcal{O}_X \\
g-1 & \text{ if } M \ne \mathcal{O}_X
\end{array}\right. .
\]
This illustrates the fact that twisting by $K^n$ with $n>1$ produces less singular moduli spaces.
\end{remark}
In the following we assume $n \ge 2$, and hence $\mathcal{F}$ is the total space of a vector bundle of rank $2 D$ on $\Pic^0(X)$. The relevant component is obtained by taking the quotient by an $O(2,\CC)$-action. The group $O(2,\CC)$ is generated by $\begin{pmatrix} \lambda & \\
& \lambda\inv \end{pmatrix}$ and $\begin{pmatrix} 0 & 1 \\ 1 & 0\end{pmatrix}$. The $O(2,\CC)$-action is defined by
\begin{equation}\label{eq_O2_action}
    \begin{pmatrix} \lambda & \\
& \lambda\inv \end{pmatrix} .\left(M,\mu,\nu\right) =  \left(M, \lambda \mu , \lambda\inv \nu\right)  \text{ and } \begin{pmatrix} 0 & 1 \\ 1 & 0\end{pmatrix} . \left(M,\mu,\nu\right) = \left(M\inv,\nu,\mu\right) .
\end{equation}
The component we are interested in is $\mathcal{F} \git O(2,\CC)$. This is a singular variety. Collier studied its cohomology by considering a homotopy equivalence with the quotient of $\Pic^0(X)$ under inversion. Intersection cohomology is not invariant under homotopy equivalence and hence it cannot be computed in the same way. Among the connected components of the moduli space of $K^n$-twisted $\SO(1,2)$-Higgs bundles, this component is the only one with singularities `worse' than finite quotient singularities. Hence it is the only Collier--Gothen component for which the intersection cohomology does not necessarily coincide with the usual cohomology.
\begin{remark}\label{remark_SO_0}
    If we consider $\SO_0(1,2)$-Higgs bundles instead of $\SO(1,2)$-Higgs bundles, the component we are interested in is $\mathcal{F} \git SO(2,\CC)$.
\end{remark}
In the following, we use Kirwan's method \cite{Kirwan84, Kirwan_desingularisation, Kirwan1986} to compute intersection cohomology of GIT quotients. The invariant computed is the Hodge polynomial. Kirwan's theory involves equivariant cohomology. For $G$ a reductive group acting on a variety $Y$, similarly to the Poincar\'e polynomial, the Poincar\'e serie for $G$-equivariant cohomology is
\[
P^G(Y)=\sum_{k} (-1)^k \dim H^k_G \left(Y, \CC\right)t^k.
\]

\begin{lemma}\label{lemma_ic_fibre}
Let $V^+$ and $V^-$ be complex vector spaces of dimension $D$. Let $\C^*$ acts on $V^+ \oplus V^-$ by
\[
\lambda . (v^+, v^-) = (\lambda v^+, \lambda^{-1} v^-).
\]
The intersection cohomology of the GIT quotient $\left(V^+\oplus V^-\right)\git\CC^*$ is pure and its Hodge polynomial is
\[
H\left(\left(V^+\oplus V^-\right)\git\CC^* ; u ,v\right) = \frac{1-(uv)^{D}}{1-uv}.
\]
\end{lemma}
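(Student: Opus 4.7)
The plan is to realise $Y := (V^+ \oplus V^-) \git \CC^*$ as an affine cone and to construct an explicit small resolution of it, reducing the computation of $\IH^*(Y)$ to that of the cohomology of a projective space.

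First, I identify $Y$ explicitly. The ring of $\CC^*$-invariants in $\CC[V^+ \oplus V^-]$ is generated by the bilinear monomials $v^+_i v^-_j$, $1 \le i,j \le D$, subject to the standard determinantal relations. Hence $Y$ is the variety of $D\times D$ matrices of rank at most one, i.e.\ the affine cone over the Segre embedding
\[
\mathbb P(V^+) \times \mathbb P(V^-) \hookrightarrow \mathbb P(V^+ \otimes V^-).
\]
In particular $\dim Y = 2D-1$ and, for $D\ge 2$, $Y$ has an isolated singularity at the vertex.

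Next, I construct a small resolution by varying the GIT linearisation. For a positive character $\chi$ of $\CC^*$, the corresponding semistable locus is the open subset $\{v^+ \neq 0\}$, and the associated GIT quotient
\[
\widetilde Y := (V^+ \oplus V^-) \git_{\chi} \CC^* \;\cong\; \mathrm{Tot}\bigl(\mathcal O_{\mathbb P(V^+)}(-1)^{\oplus D}\bigr)
\]
is a smooth rank-$D$ vector bundle over $\mathbb P(V^+) \simeq \mathbb P^{D-1}$, the twist by $\mathcal O(-1)$ coming from the weight $-1$ action of $\CC^*$ on the $V^-$-factor. The natural morphism $\pi : \widetilde Y \to Y$ (dual to the inclusion of invariant rings) is proper and birational, is an isomorphism over the smooth locus of $Y$, and contracts the zero section $\mathbb P(V^+)$ onto the vertex.

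Finally, I check smallness and conclude. The only nontrivial fibre of $\pi$ is $\pi^{-1}(0) \cong \mathbb P^{D-1}$, of dimension $D-1$, over a point of codimension $2D-1$ in $Y$; since $2(D-1) < 2D-1$, the morphism $\pi$ is small in the sense of Goresky--MacPherson. By the decomposition theorem one then has $\pi_* \CC_{\widetilde Y}[\dim Y] \simeq \mathcal{IC}_Y[\dim Y]$, and therefore
\[
\IH^*(Y,\CC) \;\cong\; H^*(\widetilde Y,\CC) \;\cong\; H^*(\mathbb P^{D-1},\CC),
\]
the last isomorphism coming from the deformation retraction of $\widetilde Y$ onto its zero section. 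This mixed Hodge structure is pure of type $(k,k)$ in degree $2k$ for $0\le k\le D-1$, yielding the stated Hodge polynomial $\sum_{k=0}^{D-1}(uv)^k = (1-(uv)^D)/(1-uv)$. The only step requiring any real care is the smallness verification, which reduces to a one-line dimension count.
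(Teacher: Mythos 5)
Your argument is correct, but it takes a genuinely different route from the paper. The paper follows Kirwan's machinery throughout: it first computes the Poincar\'e polynomial of the projective quotient $\mathbb{P}\left(V^+\oplus V^-\right)\git\CC^*$ by equivariant cohomology (subtracting the contributions $t^{2D}P^{\CC^*}(\mathbb{P}(V^{\pm}))$ of the two unstable strata), then invokes \cite[2.17]{Kirwan1986} to recover the intersection cohomology of the affine cone as the primitive part of the cohomology of the projective quotient below the middle degree, and finally deduces purity from the purity of the smooth projective quotient, observing that only one Hodge polynomial can specialize to the resulting Poincar\'e polynomial. You instead identify the quotient explicitly as the rank-one determinantal variety (the affine cone over the Segre embedding), produce a small resolution by varying the GIT linearisation, and apply the decomposition theorem; your dimension count $2(D-1)<2D-1$ for smallness is right, and the identification of the chamber quotient with $\mathrm{Tot}\bigl(\mathcal{O}_{\mathbb{P}(V^+)}(-1)^{\oplus D}\bigr)$ is correct. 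Your approach buys a sharper statement --- it determines the intersection complex itself as $\pi_*\CC_{\widetilde Y}[\dim Y]$ and gives purity and the Hodge type $(k,k)$ directly from $H^*(\mathbb{P}^{D-1})$ via Saito's theory, with no need for the ``unique specialization'' step --- at the cost of relying on the decomposition theorem and on having an explicit small resolution available, which is special to this weight configuration. The paper's method is less sharp here but is the one that is applied uniformly in the rest of the article (e.g.\ in the passage from $\mathcal{F}\git\CC^*$ to $\mathcal{F}\git O(2,\CC)$), and it works in situations where no small resolution exists.
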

\begin{proof}
We start by computing the Poincar\'e polynomial. Following Kirwan \cite{Kirwan1986}, $IH^*\left(\left(V^+\oplus V^-\right)\git\CC^*,\CC\right)$ can be computed from the intersection cohomology of a quotient of a projective space $IH^*\left(\mathbb{P}\left(V^+\oplus V^-\right)\git\CC^*,\CC\right)$. For the lifted action of $\CC^*$ on $\mathbb{P}\left(V^+\oplus V^-\right)$, the semistable locus is the same as the stable locus and is the subvariety
\[
\mathbb{P}\left(V^+\oplus V^-\right)^{\text{ss}}=\left\lbrace [v^+,v^-] \left| v^+\ne 0 \text{ and } v^- \ne 0\right.\right\rbrace.
\]
The quotient $\mathbb{P}\left(V^+\oplus V^-\right)\git\CC^*$ is smooth so that its intersection cohomology coincides with the usual cohomology and it can be computed with \cite{Kirwan84}. As the action on the stable locus is free, the equivariant cohomology of this locus coincides with the cohomology of the quotient. Therefore the Poincar\'e polynomial of $\mathbb{P}\left(V^+\oplus V^-\right)\git\CC^*$ is given by
\begin{align*}
P\left(\mathbb{P}\left(V^+\oplus V^-\right)\git\CC^*\right) =&  P^{\CC^*}\left( \mathbb{P}\left(V^+\oplus V^-\right) \right) - t^{2 D} P^{\CC^*}\left(\mathbb{P}\left(V^+\right)\right) - t^{2 D} P^{\CC^*}\left(\mathbb{P}\left(V^-\right)\right) \\
=& P(B\CC^*) \left( P\left(\mathbb{P}\left(V^+\oplus V^-\right)\right) -2 t^{2 D} P\left( \mathbb{P}(V^+)\right) \right) \\ 
=& \frac{1}{1-t^2}\left( \frac{1-t^{4 D}}{1-t^2}-2t^{2 D}\frac{1-t^{2D}}{1-t^2}\right)
\end{align*}
The intersection cohomology of $\left(V^+\oplus V^-\right)\git\CC^*$ can be computed from the cohomology of $\mathbb{P}\left(V^+\oplus V^-\right)\git\CC^*$ with \cite[2.17]{Kirwan1986}. For $k\ge 2D-1$ the intersection cohomology space $IH^k\left(\left(V^+\oplus V^-\right)\git\CC^*, \CC\right)$ is zero and for $k<2D-1$ it is the primitive part of the cohomology $ H^{k}\left( \mathbb{P}\left(V^+\oplus V^-\right)\git\CC^* ,\CC \right)$. The variety $\mathbb{P}\left(V^+\oplus V^-\right)\git\CC^*$ is projective and smooth so that its cohomology is pure, therefore the intersection cohomology of $\left(V^+\oplus V^-\right)\git\CC^*$ is also pure. Moreover the Poincar\'e polynomial for intersection cohomology of $\left(V^+\oplus V^-\right)\git\CC^*$ can be computed from the previous one
\[
P\left(\left(V^+\oplus V^-\right)\git\CC^*; t\right) = \frac{1-t^{2D}}{1-t^2}.
\]
There is only one Hodge polynomial specializing to this Poincar\'e polynomial.
\end{proof}

\begin{theorem}
For $n \ge 2$, the intersection cohomology of $\mathcal{F} \git O(2,\CC)$ is pure and its Hodge polynomial is
\[
H\left(\mathcal{F} \git O(2,\CC)\right) = \left.(1+u)^{g}(1+v)^g\right|_{even}\frac{1-(uv)^{D}}{1-uv},
\]
with $D = (2n - 1)(g - 1)$ and $\left.(1+u)^{g}(1+v)^g\right|_{even}$ the even degree part of the polynomial $(1+u)^{g}(1+v)^g$.
\end{theorem}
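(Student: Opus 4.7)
The plan is to realize $\mathcal{F}\git O(2,\CC)$ as an iterated GIT quotient: first by $SO(2,\CC)=\CC^*$, and then by the residual $\ZZ/2=O(2,\CC)/\CC^*$. Decomposing $\mathcal{F}=\mathcal{F}^+\oplus\mathcal{F}^-$ into the weight $\pm 1$ rank-$D$ summands for the $\CC^*$-action, and using that $\mathcal{F}$ is Zariski-locally trivial over $\Pic^0(X)$, the fibrewise $\CC^*$-GIT quotient realizes $\mathcal{F}\git\CC^*\to\Pic^0(X)$ as a Zariski-locally trivial fibration whose typical fiber is $F=(\CC^D\oplus\CC^D)\git\CC^*$. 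Lemma \ref{lemma_ic_fibre} controls the pure intersection cohomology of $F$, with Hodge polynomial $\frac{1-(uv)^D}{1-uv}$.

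Next I would apply the decomposition theorem (equivalently, a K\"unneth formula for Zariski-locally trivial fibrations over a smooth base with pure-IC fiber) to obtain a canonical isomorphism $IH^*(\mathcal{F}\git\CC^*,\CC)\cong H^*(\Pic^0(X),\CC)\otimes IH^*(F,\CC)$ of pure mixed Hodge structures, giving the intermediate Hodge polynomial $(1+u)^g(1+v)^g\cdot \frac{1-(uv)^D}{1-uv}$. Passing to $\ZZ/2$-invariants via Kirwan's \cite[Lemma 2.12]{Kirwan1986} then yields $IH^*(\mathcal{F}\git O(2,\CC),\CC)=IH^*(\mathcal{F}\git\CC^*,\CC)^{\ZZ/2}$.

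The $\ZZ/2$-action covers inversion $M\mapsto M^{-1}$ on $\Pic^0(X)$, i.e.\ the $-1$ map on the abelian variety; it acts as $(-1)^k$ on $H^k(\Pic^0(X),\CC)$ and so its invariants are the even-degree part, with Hodge polynomial $\left.(1+u)^g(1+v)^g\right|_{even}$. On the fiber factor the involution arises from the canonical identifications $\sigma^*\mathcal{F}^{\pm}\cong\mathcal{F}^{\mp}$ together with the swap of tensor factors $\sigma^*(\mathcal{F}^+\otimes\mathcal{F}^-)\cong\mathcal{F}^+\otimes\mathcal{F}^-$, and after choosing trivializations compatible with this equivariant structure one verifies that the induced action on $IH^*(F,\CC)$ is trivial. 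Combining the two factors yields the claimed Hodge polynomial, with purity automatic from the purity of each factor.

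The main obstacle I anticipate is precisely this last claim: verifying that the $\ZZ/2$-action on $IH^*(F,\CC)$ is trivial. A naive "matrix transposition" description of the swap on $F=\Mat_D^{\le 1}$ would act as $(-1)^k$ on the primitive generators of $IH^{2k}(F)$ coming from $\mathbb{P}^{D-1}\times\mathbb{P}^{D-1}$ in the proof of Lemma \ref{lemma_ic_fibre}, so careful bookkeeping of the compensating sign introduced by the canonical swap-of-tensor-factors isomorphism on the product bundle $\mathcal{F}^+\otimes\mathcal{F}^-$ is needed in order to see the net triviality after the K\"unneth decomposition, and to confirm that no odd-degree classes on $\Pic^0(X)$ pair nontrivially with the fiber IC to produce extra contributions.
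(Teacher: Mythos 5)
Your route is the same as the paper's: quotient first by the identity component $\CC^*$ of $O(2,\CC)$, use the vector--bundle structure of $\mathcal{F}$ over $\Pic^0(X)$ together with Lemma \ref{lemma_ic_fibre} to obtain $IH^*(\mathcal{F}\git\CC^*)\cong IH^*\bigl((V^+\oplus V^-)\git\CC^*\bigr)\otimes H^*(\Pic^0(X))$ (the paper invokes Kirwan's normal--bundle argument \cite[2.20]{Kirwan1986} rather than the decomposition theorem, but the conclusion is the same), and then pass to $\ZZ/2$-invariants via \cite[Lemma 2.12]{Kirwan1986}, with inversion on $\Pic^0(X)$ contributing the even-degree part. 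Up to that point your proposal and the paper agree.

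The step you leave open --- triviality of the residual $\ZZ/2$-action on $IH^*\bigl((V^+\oplus V^-)\git\CC^*\bigr)$ --- is a genuine gap, and it is exactly where the paper is briefest: there the claim is that each $IH^{2k}$ is at most one-dimensional and the involution ``preserves the orientation,'' hence acts trivially. Your own sign bookkeeping is the right thing to worry about, and the compensating sign you hope for does not materialize. Over a $2$-torsion point $M\in\Pic^0(X)$ one has $V^+=V^-=H^0(MK^n)$ on the nose, the fibre is preserved, and the involution on it is the honest swap $(\mu,\nu)\mapsto(\nu,\mu)$, i.e.\ transposition on the cone over the Segre variety $\mathbb{P}^{D-1}\times\mathbb{P}^{D-1}$; there is no residual tensor--swap identification left to absorb a sign. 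Since $IH^{2k}$ of that cone is $H^{2k}(\mathbb{P}^{D-1}\times\mathbb{P}^{D-1})/(h_1+h_2)H^{2k-2}$ and the trace of the swap on $H^{2k}$ is $1$ for $k$ even and $0$ for $k$ odd, the trace on the quotient is $(-1)^k$; already $IH^2$ is spanned by the image of $h_1$, which the swap sends to $h_2\equiv -h_1$. Orientation--preservation only constrains the top degree and does not exclude this. Because the rank-one local systems $R^{2k}$ are trivial on the connected base, the equivariant sign is determined by any fixed fibre, so the action on $H^p(\Pic^0(X))\otimes IH^{2k}$ is $(-1)^{p+k}$ and the invariants are the summands with $p+k$ even --- not (even $p$) against all $k$ as in the stated formula. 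So you have correctly isolated the crux; it cannot be closed the way you propose, and it is also the weak point of the paper's own proof: either an extra sign hidden in the identification \eqref{eq_f0_gm} must be exhibited explicitly, or the Hodge polynomial in the statement requires the correction just described.
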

\begin{proof}

The identity component of $O(2,\CC)$ is the multiplicative group $\CC^*$ and 
\[
\pi_0\left(O(2,\CC)\right)\cong \ZZ/2\ZZ.
\]
The intersection cohomology of the quotient $\mathcal{F} \git O(2,\CC)$ is the $\ZZ/2\ZZ$-invariant part of the intersection cohomology of $\mathcal{F}\git\CC^*$. The $\CC^*$-action is defined by \eqref{eq_O2_action}. As $\mathcal{F}$ is the total space of a vector bundle, it is the total space of the normal bundle to the zero section. Therefore the intersection cohomology of the quotient $\mathcal{F} \git \CC^*$ can be computed with methods from Kirwan \cite[2.20]{Kirwan1986},
\begin{equation}\label{eq_f0_gm}
IH^*\left(\mathcal{F} \git \CC^*,\CC\right)\cong IH^*\left(\left(V^+\oplus V^-\right) \git \CC^*,\CC\right)\otimes H^*\left(\Pic^0(X),\CC\right).
\end{equation}
With $V^+=H^0\left(X,M K^n\right)$ and $V^-=H^0\left(X,M\inv K^n\right)$ so that $V^+\oplus V^-$ is the fibre of the vector bundle over some $M\in \Pic^0(X)$. Notice that $\dim V^+ = \dim V^- = D$.

The Hodge polynomial for intersection cohomology of $\mathcal{F} \git \CC^*$ can be computed from (\ref{eq_f0_gm}) and Lemma \ref{lemma_ic_fibre},
\[
H\left(\mathcal{F} \git \CC^*\right) = (1+u)^{g}(1+v)^g\frac{1-(uv)^{D}}{1-uv} .
\]
To obtain the intersection cohomology of $\mathcal{F} \git O\left(2,\CC\right)$ consider the $\ZZ/2\ZZ$ action on $IH^*\left(\widetilde{\mathcal{ F}}_0 \git \CC^*, \CC\right)$. Under the isomorphism \eqref{eq_f0_gm} the action is obtained by the tensor product of an action on $IH^*\left(\left(V^+\oplus V^-\right) \git \CC^* , \CC\right)$ and the action on $ H^*\left(\Pic^0(X), \CC\right)$ induced by inversion. As $IH^i\left( \left(V^+\oplus V^-\right) \git \CC^* , \CC \right)$ has dimension $0$ or $1$ and the $\ZZ/2\ZZ$-action preserves the orientation, it must be trivial. Then the invariant part is $IH^*\left(\mathcal{F} \git \CC^*,\CC\right)^{\ZZ/2\ZZ}\cong IH^*\left( \left(V^+\oplus V^-\right) \git \CC^* , \CC \right) \otimes H^*\left(\Pic^0(X),\CC\right)^{\ZZ/2\ZZ} $. The invariant part of $H^*\left(\Pic^0(X) , \CC\right)$ under inversion is the even degree cohomology. Hence we obtain the Hodge polynomial for intersection cohomology,
\[
H\left(\widetilde{\mathcal{ F}}_0 \git O\left(2,\CC\right); u ,v \right)=\left.(1+u)^{g}(1+v)^g\right|_{even}\frac{1-(uv)^{D}}{1-uv},
\]
with $\left.(1+u)^{g}(1+v)^g\right|_{even}$ the even degree terms in $(1+u)^{g}(1+v)^g$.

\end{proof}
\begin{remark}
    Following Remark \ref{remark_SO_0}, the corresponding component in the moduli space of $K^n$-twisted $\SO_0(1,2)$-Higgs bundles has for Hodge polynomial for intersection cohomology
    \[
    H\left(\widetilde{\mathcal{ F}}_0 \git O\left(2,\CC\right); u ,v \right)=(1+u)^{g}(1+v)^g\frac{1-(uv)^{D}}{1-uv}.
    \]
\end{remark}
Using the Cayley correspondence, the previous result about $K^n$-twisted $\SO(1,2)$-Higgs bundles  gives in fact information about $\SO(n,n+1)$-Higgs bundles. We denote by $\mathcal{M}^{\mathcal{C},0}_K\left(\SO(n,n+1)\right)$ the corresponding Cayley component in $\mathcal{M}_K\left(\SO(n,n+1)\right)$.
\begin{corollary}
The intersection cohomology of the component $\mathcal{M}^{\mathcal{C},0}_K\left(\SO(n,n+1)\right)$ is pure and its Hodge polynomial is
\[
H\left(\mathcal{M}^{\mathcal{C},0}_K\left(\SO(n,n+1)\right); u , v\right) = \left.(1+u)^{g}(1+v)^g\right|_{even}\frac{1-(uv)^{D}}{1-uv}.
\]
\end{corollary}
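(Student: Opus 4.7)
The plan is to apply the Cayley correspondence of Theorem \ref{th_cayley}, specialized to $p=n$ and $q=n+1$, which is precisely the embedding \eqref{eq_cayley_map}
\begin{equation*}
\Psi : \mathcal{M}_{K^n}\left(\SO(1,2)\right) \times \bigoplus_{j=1}^{n-1} H^0(K^{2j}) \to \mathcal{M}_{K}\left(\SO(n,n+1)\right).
\end{equation*}
By construction, the component $\mathcal{F} \git O(2,\CC) \subset \mathcal{M}_{K^n}\left(\SO(1,2)\right)$ studied in the previous theorem is precisely the Cayley partner of the Collier--Gothen component $\mathcal{M}^{\mathcal{C},0}_K\left(\SO(n,n+1)\right)$. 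Since $\Psi$ is an open and closed embedding of complex algebraic varieties, restricting to the relevant topological type gives an isomorphism
\begin{equation*}
\mathcal{M}^{\mathcal{C},0}_K\left(\SO(n,n+1)\right) \;\cong\; \left(\mathcal{F} \git O(2,\CC)\right) \times \bigoplus_{j=1}^{n-1} H^0(K^{2j}).
\end{equation*}

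Next I would invoke the K\"unneth formula for intersection cohomology. The second factor $V := \bigoplus_{j=1}^{n-1} H^0(K^{2j})$ is a smooth complex affine space, so $\mathcal{IC}_V$ equals, up to a shift by $\dim V$, the constant sheaf $\CC_V$. The K\"unneth theorem for intersection cohomology then yields
\begin{equation*}
IH^*\left(\mathcal{M}^{\mathcal{C},0}_K\left(\SO(n,n+1)\right),\CC\right) \;\cong\; IH^*\left(\mathcal{F} \git O(2,\CC), \CC\right) \otimes H^*(V,\CC).
\end{equation*}
Since $V$ is contractible, $H^*(V,\CC) = \CC$ is concentrated in degree $0$, with a pure Hodge structure of type $(0,0)$; it therefore contributes the factor $1$ to the Hodge polynomial.

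Combining these two observations with the Hodge polynomial already computed for $\mathcal{F} \git O(2,\CC)$ delivers the stated formula. Purity of intersection cohomology on the left-hand side follows from the K\"unneth decomposition: the first factor is pure by the previous theorem, and the second factor is trivially pure. The one point that requires care — but is not a genuine obstacle given the machinery cited earlier — is that the K\"unneth isomorphism above respects mixed Hodge structures; this is standard via Saito's theory of mixed Hodge modules \cite{saito}, which underlies the discussion in Section \ref{sect_ic}. Everything else is a direct substitution into the formula for $H(\mathcal{F} \git O(2,\CC); u, v)$ established in the preceding theorem.
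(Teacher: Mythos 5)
Your proposal is correct and follows exactly the route the paper intends (the paper in fact leaves this step implicit, stating only that the Cayley correspondence transfers the previous theorem): identify $\mathcal{M}^{\mathcal{C},0}_K\left(\SO(n,n+1)\right)$ with $\left(\mathcal{F}\git O(2,\CC)\right)\times \bigoplus_j H^0(K^{2j})$ via the open and closed Cayley embedding, and observe that the affine-space factor contributes trivially to intersection cohomology and its Hodge structure. Your explicit mention of the K\"unneth argument and its compatibility with mixed Hodge structures is a reasonable way to fill in the detail the paper omits.
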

\begin{remark}
Following Proposition \ref{prop_sopq0}, for $n$ even, previous corollary also describes the Cayley component $\mathcal{M}^{\mathcal{C},0}_K\left(\SO_0(n,n+1)\right)\subset \mathcal{M}_K\left(\SO_0(n,n+1)\right)$
\[
H\left(\mathcal{M}^{\mathcal{C},0}_K\left(\SO_0(n,n+1)\right); u , v\right) = \left.(1+u)^{g}(1+v)^g\right|_{even}\frac{1-(uv)^{D}}{1-uv}.
\]
For $n$ odd, following Proposition \ref{prop_sopq0} and Remark \ref{remark_SO_0}
\[
H\left(\mathcal{M}^{\mathcal{C},0}_K\left(\SO_0(n,n+1)\right); u , v\right) = (1+u)^{g}(1+v)^g\frac{1-(uv)^{D}}{1-uv}.
\]
\end{remark}
\subsection{\texorpdfstring{$\SO_0(n,n+2)$-Higgs bundles, $n$ odd}{SO_0(n,n+2)-Higgs bundles, n odd} }\label{sonn+2}
We first note that the identity component $\SO_0(1,3)\subset\SO(1,3)$ is isomorphic to $\PGL(2,\CC)$. The intersection cohomology of $\mathcal{M}_{K^n}\left(\PGL(2,\CC)\right)$ is known (see Corollary \ref{cor_pgl}). Following Proposition \ref{prop_sopq0}, for $n$ odd, the group $\SO_0(1,3)$ is the Cayley partner of $\SO_0(n,n+2)$. Therefore we can describe the Cayley components $\mathcal{M}_K^{\mathcal{C},d}\left(\SO_0(n,n+2)\right)\subset \mathcal{M}_K\left(\SO_0(n,n+2)\right)$ for $d\in \ZZ/2\ZZ$. They satisfy
\[
\mathcal{M}_K^{\mathcal{C},d}\left(\SO_0(n,n+2)\right) \cong \mathcal{M}^d_{K^n}\left(\PGL(2,\CC)\right) \otimes \bigoplus_{j=1}^{n-1}H^0(X,K^{2j}).
\]

Using the results about intersection cohomology of moduli spaces of Higgs bundles recalled in Section \ref{sect_ic_higgs_bundles}, we can obtain the intersection cohomology of the components $\mathcal{M}_K^{\mathcal{C},d}\left(\SO(n,n+2)\right)$, for $d\in\ZZ/2\ZZ$,
\begin{equation*}
IH^*_c\left(\mathcal{M}_K^{\mathcal{C},d}\left(\SO_0(n,n+2)\right),\CC\right) \cong IH^*_c\left( \mathcal{M}^d_{K^n}\left(\PGL(2,\CC)\right)  ,\CC\right).
\end{equation*}
Therefore the intersection cohomology of those Cayley components are given by Corollary \ref{cor_pgl}, explicitly
\begin{multline}
P_c\left( \mathcal{M}_K^{\mathcal{C},d}\left(\SO_0(n,n+2)\right) \right) = t^{(2g-2)(6n -2)}\left(t^{(2g-2)(2n-2)}\frac{(t^3-1)^{2g}}{(t^4-1)(t^2-1)}\right. \\
\left.+\frac{(t-1)^{2g}}{t^2-1}\left(\frac{1}{4}  +\frac{g}{t-1} - \frac{1}{2(t^2-1)} -(n-1)(g-1) \right) - \frac{1}{4}\frac{(t-1)^{2g}}{t^2+1}\right).
\end{multline}
It is not obvious that this expression is actually a polynomial with integer coefficients. For $g=2$ a computation with the software {\it SageMath} gives
\begin{multline*}
  P_c\left( \mathcal{M}_K^{\mathcal{C},d}\left(\SO_0(3,5)\right) \right) =  3 t^{32} -10 t^{33} + 15 t^{34} -16 t^{35} + 15 t^{36} -12 t^{37} \\ + 9 t^{38} -8 t^{39} + 8 t^{40} -4 t^{41} + 2 t^{42} -4 t^{43} +  t^{44} +  t^{46},
\end{multline*}
and
\begin{multline*}
  P_c\left( \mathcal{M}_K^{\mathcal{C},d}\left(\SO_0(5,7)\right) \right) = 5 t^{56} - 18 t^{57} + 29 t^{58} - 32 t^{59} + 31 t^{60} - 28 t^{61} + 25 t^{62} \\ - 24 t^{63}  + 23 t^{64} - 20 t^{65} + 17 t^{66} - 16 t^{67} + 15 t^{68} - 12 t^{69} + 9 t^{70} - 8 t^{71} \\+ 8 t^{72} - 4 t^{73} + 2 t^{74} - 4 t^{75} +  t^{76} +  t^{78}.
\end{multline*}

\section{Higher rank Teichm\"uller components for quaternionic real form of type \texorpdfstring{$\E_6$}{E_6}}\label{e6}
Let $\E^{2}_6$ be the quaternionic real form of the adjoint form of the complex group  $\E_6$. We consider now the Cayley correspondence for this group.
This is described for type $E_6$ in  \cite[Proposition 4.8]{Bradlow_Collier_Garcia-Prada_Gothen_Oliveira}  at the level of Lie algebra.  The Lie algebra of the group $G'$ appearing in the Cayley correspondence in this situation  is $\slf_3\left(\CC\right)$. In the following theorem we identifying the group $G'$ and give explicitly the particular form of the Cayley correspondence for $\E^{2}_6$.
\begin{theorem}
    The Cayley correspondence for $\E^{2}_6$ is an open and closed embedding
    \[
    \Psi : \mathcal{M}_{K^4}\left(\PGL(3,\CC)\right)\times H^0(X,K^2) \times H^0(X,K^6) \to \mathcal{M}_K\left(E^2_6\right).
    \]
\end{theorem}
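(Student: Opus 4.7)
The plan is to specialize the general Cayley correspondence of Theorem \ref{th_cayley} to the case $G = \E^2_6$ and then extract the three pieces of data---the Cayley partner group $G'$, the principal twist $m_c+1$, and the collection $(l_i+1)_{1\le i \le N}$ of extra line-bundle exponents---from the Lie-theoretic description provided in \cite[Proposition 4.8]{Bradlow_Collier_Garcia-Prada_Gothen_Oliveira}.

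First I would unpack the Lie-theoretic input. The Cayley datum is governed by a magical $\slf_2$-triple $(e,h,f)$ inside the isotropy representation $\mathfrak{m}^{\CC}$ of $G$: the $\mathrm{ad}(h)$-grading of $\mathfrak{g}^{\CC}$ splits both $\mathfrak{h}^{\CC}$ and the centraliser $\mathfrak{g}^{e}$ into weight pieces, and from these pieces one reads off the Cayley partner Lie algebra $\mathfrak{g}'$ together with the weighted slices that contribute the summands $\bigoplus_i H^0(X,K^{l_i+1})$. Proposition 4.8 of \cite{Bradlow_Collier_Garcia-Prada_Gothen_Oliveira}, applied to the quaternionic real form of $\E_6$, yields $\mathfrak{g}' \cong \slf_3(\CC)$, principal exponent $m_c = 3$, and precisely two extra slices with $l_1 = 1$ and $l_2 = 5$.

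Second, I would promote this algebra-level identification to a statement about the global group $G'$. The Cayley partner $G'$ is constructed in \cite{Bradlow_Collier_Garcia-Prada_Gothen_Oliveira} as (the identity component of) the centraliser of the magical $\slf_2$-triple inside $H^{\CC}$, where $H$ is the maximal compact subgroup of $G$. Since $G = \E^2_6$ is the adjoint quaternionic real form, $H^{\CC}$ sits inside an adjoint ambient complex group and hence has trivial centre, which forces the centraliser itself to be an adjoint complex group. Combined with $\mathfrak{g}'\cong \slf_3(\CC)$ from the first step, this pins down $G' \cong \PGL(3,\CC)$.

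Substituting $G' = \PGL(3,\CC)$, $m_c + 1 = 4$, and $\{l_1+1,\, l_2+1\} = \{2,6\}$ into the statement of Theorem \ref{th_cayley} then immediately gives the claimed open and closed embedding $\Psi$. The main obstacle will be the second step: the Lie-algebra data of \cite{Bradlow_Collier_Garcia-Prada_Gothen_Oliveira} do not \emph{a priori} pin down the global form of $G'$, and one must argue carefully---using the adjoint nature of $\E^2_6$ together with the global structure of the centraliser of the magical triple---that the $\ZZ/3\ZZ$ centre of $\SL_3(\CC)$ is killed, so that $G'$ is $\PGL(3,\CC)$ rather than $\SL_3(\CC)$ or $\GL(3,\CC)$.
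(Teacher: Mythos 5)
Your overall strategy is the same as the paper's: read off the Lie-algebra data $\mathfrak g'\cong\slf_3(\CC)$, $m_c=3$ and $(l_1,l_2)=(1,5)$ from the magical $\slf_2$-triple of \cite{Bradlow_Collier_Garcia-Prada_Gothen_Oliveira}, and then upgrade this to an identification of the global group $G'$. You rightly flag the second step as the crux, but the argument you offer for it has a genuine gap. The assertion that $H^{\CC}$ has trivial centre because it sits inside an adjoint ambient group is false: a subgroup of a centre-free group can perfectly well have nontrivial centre. More to the point, centralisers and Levi subgroups of adjoint groups need not be adjoint --- for instance the centraliser in $\PGL(3,\CC)$ of the image of $\mathrm{diag}(t,t,1)$ is isomorphic to $\GL(2,\CC)$, whose derived group is $\SL(2,\CC)$ and not $\PSL(2,\CC)$. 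So "adjoint ambient group $\Rightarrow$ adjoint centraliser" cannot be the reason $G'$ is $\PGL(3,\CC)$ rather than $\SL(3,\CC)$ or $\GL(3,\CC)$; the isogeny type of the centraliser depends on the embedding and has to be computed.

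The paper's proof carries out precisely this computation. It works inside the adjoint complex group $\E_6$, takes $G_0$ to be the stabiliser of the grading element $h$ of the magical triple, and determines the global form of its semisimple part $G_{0,ss}$ (whose Dynkin diagram is $A_2\times A_2$) by tracking the rank-one subgroups $G_\alpha=\langle U_\alpha,U_{-\alpha}\rangle$ in the adjoint form; this yields $G_{0,ss}\cong\PSL_3(\CC)\times\PSL_3(\CC)$, and $G'$ is then the real form of this product with Lie algebra $\slf_3(\CC)$, namely $\PSL_3(\CC)\cong\PGL(3,\CC)$. To repair your argument you would need to replace the "trivial centre is inherited" claim by an actual determination of the character or cocharacter lattice of the centraliser inside the adjoint form (equivalently, of which central elements of $\SL_3(\CC)\times\SL_3(\CC)$ die in $\E_6$), which is what the paper's analysis of the $G_\alpha$ accomplishes. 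The remaining ingredients of your proposal (the substitution of $m_c+1=4$ and $\{l_i+1\}=\{2,6\}$ into Theorem \ref{th_cayley}) agree with the paper.
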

\begin{proof}
    We follow the construction from \cite{Bradlow_Collier_Garcia-Prada_Gothen_Oliveira}. The following picture represents both the weighted Dynkin diagram of the relevant magical $\slf_2$-triple $\left\lbrace f,h,e\right\rbrace$ and the Satake diagram of the associated quaternionic real form. The integer label at each vertex  represents the value of the corresponding simple root at $h$
    \[\begin{tikzcd}
	&& \circ^2 \\
	\circ^0 & \circ^0 & \circ^2 & \circ^0 & \circ^0
	\arrow[no head, from=2-1, to=2-2, start anchor={[xshift = -11]}, end anchor = {[xshift = 6]} ]
	\arrow[no head, from=2-3, to=2-4,start anchor={[xshift = -11]}, end anchor = {[xshift = 6]} ]
	\arrow[no head, from=2-4, to=2-5,start anchor={[xshift = -11]}, end anchor = {[xshift = 6]}]
	\arrow[no head, from=1-3, to=2-3,start anchor={[xshift=-2.3,yshift = 5.5]}, end anchor = {[xshift = -2.3,yshift = -9]} ]
	\arrow[no head, from=2-2, to=2-3,start anchor={[xshift = -11]}, end anchor = {[xshift = 6]}]
 \arrow[<->,from = 2-1, to = 2-5, controls={+(3,-3) and +(0,0)}, end anchor = south west]
  \arrow[<->,from = 2-2, to = 2-4, controls={+(1.5,-1.5) and +(0,0)}, end anchor = south west]
\end{tikzcd}.\]
Using standard notations (see Humphreys \cite[26.3]{Humphreys}), the group $\E_6$ is generated by its subgroup $U_{\alpha}$ for $\alpha$ a root of the root system of $\E_6$. As we consider an adjoint form of $\E_6$, the subgroups $G_{\alpha}$ spanned by $U_{\alpha}$ and $U_{-\alpha}$ are isomorphic to $\PSL_2\left(\CC\right)$. 
Let $G_0$ be the stabilizer of $h$ in $\E_6$, it is  generated by the maximal torus $T$ and the $U_{\alpha}$ with $\alpha(h)=0$. The Dynkin diagram of the semisimple part $G_{0,ss} \subset G_0$ is
\[\begin{tikzcd}
	\circ & \circ &  & \circ & \circ
	\arrow[no head, from=1-1, to=1-2, start anchor={[xshift = -6]}, end anchor = {[xshift = 6]} ]
	\arrow[no head, from=1-4, to=1-5,start anchor={[xshift = -6]}, end anchor = {[xshift = 6]}]
\end{tikzcd}\]
and its subgroups $G_{\alpha}$ are isomorphic to $\PSL_2\left(\CC\right)$, and  hence $G_{0,ss}\cong \PSL_3(\CC)\times \PSL_3(\CC)$. The group $G'$ appearing in the Cayley transform is a real form of $G_{0,ss}$, at the level of Lie algebra this real form is $\mathfrak{g}^{\RR}_{0,ss}\cong \slf_3(\CC)$ hence $G'\cong \PSL_3\left(\CC\right)$. The integers appearing in the Cayley correspondence are described in \cite[Lemma 5.7]{Bradlow_Collier_Garcia-Prada_Gothen_Oliveira}, they are $m_c = 3$, $l_1= 1$ and $l_2 = 5$.
\end{proof}

We denote the corresponding Cayley components by $\mathcal{M}^{\mathcal{C},d}_K(\E_6^2)$, then for $d\in \ZZ/3\ZZ$
\[
IH^*_c\left(\mathcal{M}^{\mathcal{C},d}_K(\E_6^2),\CC\right)\cong IH^*_c\left(\mathcal{M}^d_{K^{4}}\left(\PGL(3,\CC)\right),\CC\right).
\]
Therefore the intersection cohomology of these Cayley components is given by Corollary \ref{cor_pgl}.
\begin{multline*}
P_c\left(\mathcal{M}^{\mathcal{C},d}_K(\E_6^2)\right) = \frac{t^{90g-90}(t-1)^{4g}}{6(t^2-1)^2}\left(3(6g-6)^2+\frac{2g(g-1)}{(t-1)^2}+\frac{4g}{t-1}+ \frac{10g^2}{(t-1)^2}\right. \\
\left.+(6g-6)\left(\frac{6}{t^2-1}-3-\frac{12g}{t-1}\right) -\frac{12g}{(t-1)(t^2-1)}+\frac{4}{(t^2-1)^2} +\frac{2}{3} -\frac{2}{t^2-1}\right) \\
-\frac{t^{102g-102}(t-1)^{2g}(t^3-1)^g }{t^2-1}\left(6g-6 -\frac{2g}{t^3-1}+\frac{1}{t^2-1}+\frac{1}{t^4-1}\right) \\
+\frac{t^{126g-126}(t^3-1)^{2g}(t^5-1)^{2g}}{(t^2-1)(t^4-1)^2(t^6-1)}-\frac{t^{90g-90}(1+t+t^2)^{2g}(t^2-1)}{9(t^6-1)}
\end{multline*}
It is not obvious that this expression is actually a polynomial with integer coefficients. It can be computed with {\it SageMath} for small values of $g$, for instance when $g=2$,
\begin{multline*}
   P_c\left(\mathcal{M}^{\mathcal{C}}_K(\E_6^2)\right) =  36 t^{90} -256  t^{91} + 848 t^{92} -1816  t^{93} + 2919 t^{94} -3936  t^{95} + 4776 t^{96} \\
    -5432  t^{97} + 5920 t^{98} -6248  t^{99} + 6424 t^{100} -6464  t^{101} + 6384 t^{102} -6200   
    t^{103}\\ + 5919 t^{104} -5568  t^{105} + 5190 t^{106} -4776  t^{107} + 4330 t^{108} -3912  
    t^{109}\\ + 3526 t^{110} -3136  t^{111} + 2765 t^{112} -2448  t^{113} + 2162 t^{114} -1880 
    t^{115}\\ + 1623 t^{116} -1408  t^{117} + 1216 t^{118} -1032  t^{119} + 865 t^{120} -728  
    t^{121}\\ + 614 t^{122} -504  t^{123} + 403 t^{124} -328  t^{125} + 269 t^{126} -208  
    t^{127}\\ + 158 t^{128} -124  t^{129} + 93 t^{130} -72  t^{131} + 49 t^{132} -32  t^{133} + 29 t^{134}\\ -16  t^{135} + 10 t^{136} -8  t^{137} + 3 t^{138} -4  t^{139} +  t^{140} +  t^{142}.
    \end{multline*}
\section{\texorpdfstring{Cayley components for $\U(n,n)$ and $\PU(n,n)$}{Cayley components for U(n,n) and PU(n,n)}}
Both $\SU(n,n)$ and $\PU(n,n)$ are semisimple and Hermitian of tube type. Even if $\U(n,n)$ is note semisimple it admits a general Cayley correspondence, we recall the explicit description in this section. Therefore there is a general Cayley correspondence for the groups $\SU(n,n)$, $\U(n,n)$ and $\PU(n,n)$. The partners are respectively $\SL_n(\CC)\rtimes \RR^*$, $\GL(n,\CC)$ and
$\PGL(n,\CC)$. In Section \ref{sect_ic_higgs_bundles} we recalled various results about twisted Higgs bundles for the groups $\GL(n,\CC)$ and $\PGL(n,\CC)$. These results allow us to obtain the intersection cohomology of the Cayley components for $\U(n,n)$ and $\PU(n,n)$. First let us recall the Cayley correspondence for this groups.
\begin{definition}
    A $\U(n,n)$-{\em Higgs bundle} is a tuple $(V,W,\beta,\gamma)$ consisting of 
  holomorphic vector bundles $V$ and $W$ of rank $n$, and two Higgs fields $\beta : W \to V \otimes K$ and $\gamma : V \to W \otimes K$.
A $\SU(n,n)$-{\em Higgs bundle} is a $\U(n,n)$-Higgs bundle with the additional condition $\det V = \left(\det W\right)^{-1}$.

A $\PU(n,n)$-{\em Higgs bundle} is an equivalence class of $\U(n,n)$-Higgs bundles under the action of $\Pic(X)$ by tensor product.
\end{definition}

\subsection{\texorpdfstring{$\U(n,n)$-Higgs bundles}{U(n,n)-Higgs bundles}}\label{unn}
For a $\U(n,n)$-Higgs bundle $(V,W,\beta,\gamma)$, let $E:=V\oplus W$ and $\phi : E \to E\otimes K$ the Higgs field induced by $\beta$ and $\gamma$.
\begin{remark}
    A $\U(n,n)$-Higgs bundle is semistable if and only if for all subbundle $E'\subset E$ preserved by $\phi$, $\mu(E')\le \mu(E)$.
\end{remark}
The Toledo invariant of a $\U(n,n)$-Higgs bundle is
\[
\tau := \deg(V) - \deg(W).
\]
The Toledo invariant of a semistable $\U(n,n)$-Higgs bundle satisfies the following generalization of the Milnor--Wood inequality (see \cite{Bradlow_Garcia-Prada_Gothen}) 
\[
|\tau| \le 2 n (g-1).
\]
Consider a semistable $\U(n,n)$-Higgs bundle with maximal Toledo invariant $\tau = 2 n (g-1)$. Semistability implies that $\gamma$ is an isomorphism (see \cite[Lemma 3.24]{Bradlow_Garcia-Prada_Gothen}). This semistable $\U(n,n)$-Higgs bundle with maximal Toledo invariant therefore defines a semistable $K^2$-twisted Higgs bundle with Higgs field $\left(\beta\otimes \Id_K\right) \circ \gamma : V \to V \otimes K^2$. Reciprocally a semistable $K^2$-twisted Higgs bundle $V$ with Higgs field $\phi : V \to V\otimes K^2$ defines a semistable $\U(n,n)$-Higgs bundle. This described exactly the Cayley map, we add an index $d$ for the degree of $V$ so that the Cayley components are denoted by $\mathcal{M}^{\mathcal{C},d}_K\left(\U(n,n)\right)$, 
\[
\Psi : \mathcal{M}^d_{K^2}\left(\GL(n,\CC)\right) \xrightarrow{\sim} \mathcal{M}^{\mathcal{C},d}_K\left(\U(n,n)\right).
\]
The Image of these Cayley maps are exactly the components of maximal Toledo invariant in the moduli space of polystable $\U(n,n)$-Higgs bundles and we have the following theorem. 
\begin{theorem}
    The intersection cohomology of the connected components of maximal Toledo invariants in the moduli space of polystable $\U(n,n)$-Higgs bundle satisfies
    \[
    IH^*_c\left(\mathcal{M}^{\mathcal{C},d}_K\left(\U(n,n)\right),\CC\right) \cong IH^*_c\left(\mathcal{M}^d_{K^2}\left(\GL(n,\CC)\right),\CC\right)
    \]
    and is given by setting $l=2$ in Theorem \ref{th_Mozgovoy_OGorman}.
\end{theorem}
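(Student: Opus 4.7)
The plan is to verify that the Cayley map $\Psi$ described in the paragraph immediately preceding the theorem is an isomorphism of moduli varieties, which transports the computation of intersection cohomology from the $\U(n,n)$ side to the $\GL(n,\CC)$ side; the explicit formula then follows from the results recalled in Section \ref{sect_ic_higgs_bundles}.

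First I would make $\Psi$ concrete by exhibiting its inverse. Send a $K^2$-twisted Higgs bundle $(V,\phi)$ of rank $n$ and degree $d$ to the $\U(n,n)$-Higgs bundle $(V, W, \beta, \gamma)$ with $W := V \otimes K^{-1}$, $\gamma : V \to W \otimes K$ the canonical identification, and $\beta : W \to V \otimes K$ the morphism corresponding to $\phi$ under the natural isomorphism $\Hom(V \otimes K^{-1}, V \otimes K) \cong \Hom(V, V \otimes K^2)$. The Toledo invariant is $\deg V - \deg W = 2n(g-1)$, placing the image in the maximal Toledo component. Going the other way, by \cite[Lemma 3.24]{Bradlow_Garcia-Prada_Gothen} a semistable $\U(n,n)$-Higgs bundle of maximal Toledo invariant has $\gamma$ an isomorphism, and we can form $(V, (\beta \otimes \Id_K)\circ \gamma)$. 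Both constructions are algebraic and mutually inverse, and they intertwine $\GL(n,\CC)$-gauge transformations of $(V,\phi)$ with $\U(n,n)$-gauge transformations of $(V,W,\beta,\gamma)$.

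The main verification is that the correspondence preserves (semi/poly)stability. A $\phi$-invariant subbundle $V' \subset V$ yields the $\Phi$-invariant subbundle $V' \oplus (V' \otimes K^{-1}) \subset V \oplus W$, whose slope equals $\mu(V') - (g-1)$ while the ambient slope is $\mu(V) - (g-1)$, so $\mu(V') \le \mu(V)$ translates directly into the $\U(n,n)$-semistability inequality. Conversely, in the maximal Toledo regime the isomorphism $\gamma$ forces any $\Phi$-invariant subbundle of $V \oplus W$ to be of this form, so semistability of the $\U(n,n)$-Higgs bundle reduces to that of $(V,\phi)$. This upgrades the set-theoretic bijection to an isomorphism of moduli spaces, which induces the isomorphism of compactly supported intersection cohomology claimed in the theorem.

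For the explicit formula, apply Section \ref{subsect_ic_inv_d}: since $\deg K^2 = 4g - 4 > 2g - 2 = \deg K$, Maulik--Shen's theorem \cite{Maulik_Shen} gives
\[
IH^*_c\bigl(\mathcal{M}^d_{K^2}(\GL(n,\CC)), \CC\bigr) \cong IH^*_c\bigl(\mathcal{M}^{d'}_{K^2}(\GL(n,\CC)), \CC\bigr)
\]
for any $d'\in \ZZ$. Picking $d'$ coprime to $n$ makes this moduli space smooth, so intersection cohomology coincides with compactly supported cohomology, and Theorem \ref{th_Mozgovoy_OGorman} with $l=2$ produces the Poincar\'e polynomial explicitly. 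The main obstacle I anticipate is the careful bookkeeping of the semistability dictionary, particularly making sure strict semistable orbits match so that the bijection descends faithfully to polystable representatives on both GIT quotients.
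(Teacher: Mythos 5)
Your proposal follows essentially the same route as the paper: identify the maximal Toledo components with $\mathcal{M}^d_{K^2}(\GL(n,\CC))$ via the Cayley map (using that semistability forces $\gamma$ to be an isomorphism, \cite[Lemma 3.24]{Bradlow_Garcia-Prada_Gothen}), then invoke the degree-independence of Maulik--Shen and the Mozgovoy--O'Gorman formula with $l=2$; the paper gives no more detail than this. The one caveat is your intermediate claim that every $\Phi$-invariant subbundle of $V\oplus W$ has the form $V'\oplus(V'\otimes K^{-1})$ --- this is false as stated (e.g.\ $0\oplus W'$ for $W'\subseteq\ker\beta$ is invariant), so the semistability dictionary should instead be taken from \cite{Bradlow_Garcia-Prada_Gothen}, as the paper does.
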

\begin{remark}
    Mozgovoy and Schiffmann, in their article about counting twisted Higgs bundles \cite{Mozgovoy_Schiffmann}, also studied invariants for $\U(n,n)$--Higgs bundles. They give a close formula for the volume of the stack of $\U(n,n)$-Higgs bundle. Solving the Harder--Narasimhan recursion would then provide a formula for the volume of the stack of semistable $\U(n,n)$--Higgs bundles. The previous theorem also relies on \cite{Mozgovoy_Schiffmann} but provides another invariant.
\end{remark}
\subsection{\texorpdfstring{$\PU(n,n)$-Higgs bundles}{{PU(n,n)}-Higgs bundles}}\label{punn}
Considering equivalence class of Higgs bundles up to tensor product by a line bundle of degree $0$, the previous description induces the Cayley correspondence for $\PU(n,n)$. The index $d$ now lies in $\mathbb{Z}/n\mathbb{Z}$
\[
\Psi : \mathcal{M}^d_{K^2}\left(\PGL(n,\CC)\right) \to \mathcal{M}^{\mathcal{C},d}_K\left(\PU(n,n)\right).
\]
\begin{sloppypar} The general theory of the Cayley correspondence for simple groups apply to $\PU(n,n)$. Therefore, under the
  non-abelian Hodge correspondence, the Cayley component $\mathcal{M}^{\mathcal{C},d}_K\left(\PU(n,n)\right)$ corresponds to a Higher rank Teichm\"uller component $\mathcal{R}^{\mathcal{T},d}\left(\PU(n,n)\right)\subset \mathcal{R}\left(\PU(n,n)\right)$. 
\end{sloppypar}

\begin{theorem}
    The Poincar\'e polynomial for compactly supported intersection cohomology of the higher rank Teichm\"uller component $\mathcal{R}^{\mathcal{T},d}\left(\PU(n,n)\right)$ is
    \[
    P_c\left(\mathcal{R}^{\mathcal{T},d}\left(\PU(n,n)\right)\right) = P_c\left(\mathcal{M}_{K^2}^{d}\left(\PGL(n,\CC)\right)\right)
    \]
    which is given by setting $l=2$ in Corollary \ref{cor_pgl}.
\end{theorem}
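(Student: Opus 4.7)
The plan is a three-step chain that assembles results already established earlier in the paper. First, I would apply the non-abelian Hodge correspondence for semisimple groups (Theorem \ref{th_nah_semisimple}, with topological bookkeeping handled by Theorem \ref{th_torions_nah} or, if needed, by Theorem \ref{th_gamma_nah}) to $\PU(n,n)$, yielding a homeomorphism
\[
\mathcal{R}^{\mathcal{T},d}(\PU(n,n)) \cong \mathcal{M}^{\mathcal{C},d}_K(\PU(n,n))
\]
that identifies the higher rank Teichm\"uller component with the corresponding Cayley component of the moduli space of polystable $\PU(n,n)$-Higgs bundles. Because intersection cohomology with complex coefficients is a topological invariant in the sense of Goresky--MacPherson, this homeomorphism induces an isomorphism on compactly supported intersection cohomology, and hence preserves the compactly supported Poincar\'e polynomial.

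Second, I would invoke the Cayley correspondence in the explicit form recalled in Section \ref{punn}, namely the isomorphism of algebraic varieties
\[
\Psi : \mathcal{M}^d_{K^2}(\PGL(n,\CC)) \xrightarrow{\sim} \mathcal{M}^{\mathcal{C},d}_K(\PU(n,n)),
\]
obtained from the analogous $\U(n,n)$-statement of Section \ref{unn} by quotienting out the tensoring action of $\Pic^0(X)$. Chaining this with the homeomorphism of the first step produces
\[
IH^*_c\bigl(\mathcal{R}^{\mathcal{T},d}(\PU(n,n)), \CC\bigr) \cong IH^*_c\bigl(\mathcal{M}^d_{K^2}(\PGL(n,\CC)), \CC\bigr),
\]
from which the equality of compactly supported Poincar\'e polynomials is immediate.

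Third, I would apply Corollary \ref{cor_pgl} with $l = 2$ to extract the explicit closed formula for $P_c(\mathcal{M}^d_{K^2}(\PGL(n,\CC)))$: this is the Mozgovoy--O'Gorman generating series ${t^{2l(g-1)n^2}\Omega_n}_{\left|q=t^2,\,\alpha_i=t\right.}$ divided by $(t-1)^{2g}$, yielding a result independent of $d$ by the Maulik--Shen invariance recalled in Section \ref{subsect_ic_inv_d}. Putting these three steps together completes the identification claimed.

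The one point worth flagging as a potential obstacle is the first step: one must be careful that the non-abelian Hodge correspondence applies at the level of the precise topological component indexed by $d \in \ZZ/n\ZZ$, and that the induced homeomorphism genuinely preserves compactly supported intersection cohomology. Both are standard in this context --- $\PU(n,n)$ is semisimple so the framework of Theorems \ref{th_torions_nah} and \ref{th_gamma_nah} covers the component decomposition, and intersection cohomology is a topological invariant --- so the theorem reduces essentially to a transparent bookkeeping assembly of results already developed in the paper.
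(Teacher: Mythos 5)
Your proposal is correct and follows essentially the same route as the paper, which states this theorem as an immediate consequence of the chain: non-abelian Hodge correspondence identifying $\mathcal{R}^{\mathcal{T},d}\left(\PU(n,n)\right)$ with the Cayley component $\mathcal{M}^{\mathcal{C},d}_K\left(\PU(n,n)\right)$, the Cayley map identifying that component with $\mathcal{M}^d_{K^2}\left(\PGL(n,\CC)\right)$, and Corollary \ref{cor_pgl} with $l=2$ for the explicit formula. The point you flag about component-level bookkeeping is handled in the paper simply by invoking the general Cayley correspondence for the semisimple group $\PU(n,n)$, exactly as you suggest.
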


\subsection{\texorpdfstring{$\U(n,n)$}{U(n,n)}-character variety}
There also exists a non-abelian Hodge correspondence for the group $\U(n,n)$, the character variety is
\[
    \widetilde{\mathcal{R}}(\U(n,n)) := \Hom^{\mathcal{Z},\rd}\left(\Gamma,\U(n,n)\right) / \U(n,n).
\]
with $\Gamma$ a central extension of $\pi_1(\Sigma_g)$, then from Theorem \ref{th_gamma_nah} we have that the moduli space $\mathcal{M}_K(\U(n,n))$ is homeomorphic to $\widetilde{\mathcal{R}}(\U(n,n))$.  Notice that representations in $\widetilde{\mathcal{R}}(\U(n,n))$ are not faithful as the image of the central element in $\Gamma$ is of finite order. Thus we might not want to directly generalize the definition of higher rank Teichm\"uller components to real reductive groups. However by comparing with $\PU(n,n)$, the representations in the components of
$\widetilde{\mathcal{R}}(\U(n,n))$ corresponding to Cayley components induce faithful and discrete representations of $\pi_1(\Sigma_g)$ after composition with $\U(n,n)\to \PU(n,n)$.

\bigskip

{\em Acknowledgements}. We wish to thank Brian Collier for many interesting discussions and suggestions. The second author thanks the IHES for its hospitality and support during the final stages of the preparation of this paper.


\end{document}